\definecolor{lightgrey}{rgb}{.804,.804,.756}
\newcommand{\Z}{\mathbb{Z}}
\newcommand{\Id}{\mathrm{Id}}
\newcommand{\Fun}{\operatorname{Fun}}
\newcommand{\Sym}{\operatorname{Sym}}
\newcommand{\Ext}{\operatorname{Ext}}
\newcommand{\CTExt}{\operatorname{CTExt}}
\newcommand{\Orb}{\operatorname{Orb}}
\newcommand{\Ker}{\operatorname{Ker}}
\renewcommand{\Im}{\operatorname{Im}}
\newcommand\Degen{{\scriptstyle\mathrm{D}}}
\newcommand\Norm{{\scriptstyle\mathrm{N}}}
\newcommand\RedC{RC}
\newcommand\RedZ{RZ}
\newcommand\RedH{RH}
\newcommand\RedQ{RQ}
\newcommand\CS{{\scriptstyle\mathrm{CS}}}
\theoremstyle{plain}
\newtheorem{thm}{Theorem}[section]
\newtheorem{pro}[thm]{Proposition}
\newtheorem{lem}[thm]{Lemma}
\theoremstyle{definition}
\newtheorem{defn}[thm]{Definition}
\newtheorem{rem}[thm]{Remark}
\newtheorem{example}[thm]{Example}
\newtheorem{notation}[thm]{Notation}
\setlist{nolistsep}
\numberwithin{equation}{section}
\begin{document}

\title[Cohomology of braces]{Cohomology and extensions of braces}

\author{Victoria Lebed}
\address{Laboratoire de Math\'ematiques Jean Leray, Universit\'e de Nantes, 2 rue de la Houssi\-ni\`ere, BP 92208 F-44322 Nantes Cedex 3, France}
\email{lebed.victoria@gmail.com, Victoria.Lebed@univ-nantes.fr}

\author{Leandro Vendramin}
\address{Depto. de Matem\'atica, FCEN,
Universidad de Buenos Aires, Pabell\'on I, Ciudad Universitaria (1428)
Buenos Aires, Argentina}
\email{lvendramin@dm.uba.ar}

\thanks{The work of L.~V. is partially supported by CONICET, PICT-2014-1376,
MATH-AmSud, and ICTP. V.~L. thanks the program ANR-11-LABX-0020-01 and Henri
Lebesgue Center (University of Nantes) for support. The authors are grateful to the reviewer for useful remarks and interesting suggestions for a further development of the subject.}

\keywords{Brace, cycle set, Yang-Baxter equation, extension, cohomology}

\subjclass[2010]{16T25, 
20N02, 
55N35, 
20E22. 
}

\begin{abstract}
Braces and linear cycle sets are algebraic structures playing a major role in the classification of involutive set-theoretic solutions to the Yang-Baxter equation. This paper introduces two versions of their (co)homology theories.	These theories mix the Harrison (co)homology for the abelian group structure and the (co)homology theory for general cycle sets, developed earlier by the authors. Different classes of brace extensions are completely classified in terms of second cohomology groups.
\end{abstract}

\maketitle

\section{Introduction}\label{S:Intro}

A \emph{(left) brace} is an abelian group $(A,+)$ with an additional group
operation~$\circ$ such that for all $a,b,c\in A$, the following
compatibility condition holds:
\begin{linenomath*}
	\begin{align}\label{E:Brace}
	a\circ (b+c)+a=a\circ b+a\circ c.
\end{align}
\end{linenomath*}
The two group structures necessarily share the same neutral element, denoted
by~$0$.  Braces, in a slightly different but equivalent form, were introduced
by Rump~\cite{MR2278047}; the definition above goes back to Ced{\'o},
Jespers, and Okni{\'n}ski~\cite{MR3177933}. To get a feeling of what
braces look like, and to convince oneself that they are not as rare in practice as one might
think, the reader is referred to Bachiller's classification of braces of
order~$p^3$ \cite{BachillerP3}. 
The growing interest into these structures is due to a number of reasons. 
First, braces generalize radical rings. Second, Catino-Rizzo and Catino-Colazzo-Stefanelli \cite{CaRi,CaCoSr,CaCoSr2} unveiled the role of an $F$-linear version of this notion into the classification problem for regular subgroups of affine groups over a field~$F$. Third, braces are 
enriched cycle sets, and are therefore important in the study of set-theoretic
solutions to the Yang-Baxter equation, as we now recall. 

A \emph{cycle set}, as defined by Rump~\cite{MR2132760}, is a set $X$ with a binary operation~$\cdot$ having bijective left translations $X \to X$, $a \mapsto b \cdot a$, 
and satisfying the relation
\begin{linenomath*}
	\begin{align}\label{E:Cyclic}
	(a\cdot b)\cdot (a\cdot c)=(b\cdot a)\cdot (b\cdot c).
\end{align}
\end{linenomath*}
Rump showed that \emph{non-degenerate} cycle sets (i.e., with invertible
squaring map $a \mapsto a \cdot a$) are in bijection with non-degenerate
involutive set-theoretic solutions to the Yang-Baxter equation. Such solutions
form a combinatorially rich class of structures, connected with many other
domains of algebra: semigroups of $I$-type, Bieberbach groups, Hopf algebras,
Garside groups, etc. The cycle set approach turned out to be extremely fruitful
for elucidating the structure of such solutions and obtaining classification
results (see, for instance,
{\cite{MR1637256,JesOknI,MR2278047,MR2442072,MR2383056,MR2652212,RumpClassical,
Chouraqui,MR3177933,MR3374524,GI15,MR3437282,2015arXiv150900420S,
Smok}}
and references therein).  In spite of the intensive ongoing research on cycle
sets, their structure is still far from being completely understood. This can
be illustrated by numerous conjectures and open questions in the area, many of
which were formulated by Gateva-Ivanova and Cameron~\cite{MR2095675, MR2885602}
and by Ced{\'o}, Jespers, and del R{\'{\i}}o~\cite{MR2584610}.

Etingof, Schedler, and Soloviev~\cite{MR1722951} initiated the study of the
structure group of a solution to the YBE---and in particular of a cycle set.
These ideas were further explored in~\cite{MR1769723} and~\cite{MR1809284} 
for non-involutive solutions.
Concretely, the \emph{structure group} $G_{(X,\cdot)}$ of a cycle set
$(X,\cdot)$ is the free group on the set~$X$, modulo the relations $(a \cdot b) a =
(b \cdot a) b$ for all $a,b \in X$\footnote{Some authors prefer an alternative relation set $a (a \cdot b) = b (b \cdot a)$, which defines an isomorphic group.}. 
In~\cite{MR1722951}, the structure group of
a non-degenerate cycle set $(X,\cdot)$ was shown to be isomorphic, as a set, to
the free abelian group $\Z^{(X)}$ on~$X$; see also~\cite{LV} for an explicit
graphical form of this isomorphism. The group $G_{(X,\cdot)}$ thus carries a
second, abelian, group structure---the one pushed back from $\Z^{(X)}$---and
becomes a brace. Moreover, $G_{(X,\cdot)}$ inherits a cycle set structure
from~$X$, and yields a key example of the following notion. A \emph{linear
cycle set} is a cycle set $(A,\cdot)$ with an abelian group operation~$+$
satisfying, for all $a,b,c\in A$, the compatibility conditions
\begin{linenomath*}
	\begin{align}
	a\cdot (b+c) &=a\cdot b+a\cdot c,\label{E:LinCyclic}\\
	(a+b)\cdot c &=(a\cdot b)\cdot (a\cdot c).\label{E:LinCyclic2}
\end{align}
\end{linenomath*}
This structure also goes back to Rump~\cite{MR2278047}, who showed it to be equivalent to the brace structure, via the relation $a\cdot b=a^{-1}\circ(a+b)$. 

Understanding structure groups and certain classes of their quotients is often regarded as a reasonable first step towards understanding cycle sets. Even better: Bachiller, Ced{\'o}, and Jespers~\cite{BCJ_Brace} recently reduced the classification problem for cycle sets to that for braces. This explains the growing interest towards braces and linear cycle sets. As pointed out by Bachiller, Ced{\'o}, Jespers, and Okni{\'n}ski~\cite{BCJO_family}, an extension theory for braces would be crucial for classification purposes, as well as for elaborating new examples. This served as motivation for our paper.

A cohomology theory for general cycle sets was developed by the authors in~\cite{LV}. The second cohomology groups were given particular attention: they were shown to encode central cycle set extensions. Here we propose homology and cohomology theories for linear cycle sets, and thus for braces. As usual, central linear cycle set extensions turn out to be classified by the second cohomology groups. 

For pedagogical reasons, we first study extensions that are trivial on the level of abelian groups, together with a corresponding (co)homology theory (Sections \ref{S:Hom}-\ref{S:Ext}). Such extensions are still of interest, since it is often the cycle set operation that is the most significant part of the linear cycle set structure (as in the example of structure groups). On the other hand, they are technically much easier to handle than the general extensions (Sections \ref{S:HomFull}-\ref{S:ExtFull}). We therefore found it instructive to present this ``reduced'' case before the general one.

When finishing this paper, we learned that an analogous extension theory was independently developed by Bachiller~\cite{BachillerSimple}, using the language of braces. 
Some fragments of it in the $F$-linear setting also appeared in the work of Catino-Colazzo-Stefanelli \cite{CaCoSr}.

 An alternative approach to extensions was suggested earlier by Ben David and Ginosar~\cite{BDG}. Concretely, they studied the lifting problem for bijective $1$-cocycles---which is yet another avatar of braces. Their work was translated into the language of braces by Bachiller~\cite{BachillerP3}. Our choice of the linear cycle set language leads to more transparent constructions. Moreover, it made possible the development of a full cohomology theory extending the degree~$2$ constructions motivated by the extension analysis. Such a theory was missing in all the previous approaches.

\section{Reduced linear cycle set cohomology}\label{S:Hom}

From now on we work with linear cycle sets (=LCS). As explained in the introduction, all constructions and results can be directly translated into the language of braces. We will perform this translation for major results only.

Take a LCS $(A, \cdot, +)$ and an abelian group $\Gamma$. For $n > 0$, let $\RedC_n(A;\Gamma)$ denote the abelian group 
$\Gamma \otimes_{\Z} \Z A^{\times n} \simeq \Gamma^{(A^{\times n})}$, modulo the linearity relation
\begin{linenomath*}
	\begin{align}\label{E:LinLast}
	\gamma (a_1,\ldots,a_{n-1}, a_n + a'_n) &= \gamma (a_1,\ldots,a_{n-1}, a_n) + \gamma (a_1,\ldots,a_{n-1}, a'_n)
\end{align}
\end{linenomath*}
for the last copy of~$A$. Denote by $\RedC^{\Degen}_n(A;\Gamma)$ the abelian subgroup of $\RedC_n(A;\Gamma)$ generated by the \emph{degenerate $n$-tuples}, i.e. $\gamma(a_1,\ldots,a_{n})$ with $a_i=0$ for some $1 \leqslant i \leqslant n$. Consider also the quotient $\RedC^{\Norm}_n(A;\Gamma) = \RedC_n(A;\Gamma)/\RedC^{\Degen}_n(A;\Gamma)$. Further, define the maps $\partial_{n} \colon \Gamma A^{\times n} \to \Gamma A^{\times (n-1)}$, $n > 1$, as the linearizations of
\begin{linenomath*}
	\begin{align}\label{E:LinCycleHom}
	\partial_{n}(a_1,\ldots,a_{n})= &(a_1 \cdot a_2,\ldots,a_1 \cdot a_{n})\notag\\	
	&+\sum\nolimits_{i=1}^{n-2}(-1)^{i} (a_1,\ldots,a_i+a_{i+1},\dots,a_{n})\\
	&+(-1)^{n-1}	(a_1,\ldots,a_{n-2},a_n).\notag
\end{align}
\end{linenomath*}
Complete this family of maps by $\partial_1=0$. Dually, for $n > 0$, let $\RedC^n(A;\Gamma)$ denote the set of maps $f \colon A^{\times n} \to \Gamma$ linear in the last coordinate:
\begin{linenomath*}\begin{align}\label{E:LinLastCo}
	f(a_1,\ldots,a_{n-1}, a_n + a'_n) &= f(a_1,\ldots,a_{n-1}, a_n) + f(a_1,\ldots,a_{n-1}, a'_n),
\end{align}\end{linenomath*}
and let $\RedC_{\Norm}^n(A;\Gamma) \subset \RedC^n(A;\Gamma)$ comprise the maps vanishing on all degenerate $n$-tuples. Define the maps $\partial^{n} \colon \Fun(A^{\times n},\Gamma) \to \Fun(A^{\times (n+1)},\Gamma)$, $n \geqslant 1$, by	
\begin{linenomath*}\begin{align}\label{E:LinCycleCoHom}
	(\partial^{n}f)(a_1,\ldots,a_{n+1})= & f(a_1 \cdot a_2,\ldots, a_1 \cdot a_{n+1})\notag\\	
	&+\sum\nolimits_{i=1}^{n-1}(-1)^{i} f(a_1,\ldots,a_i+a_{i+1},\dots,a_{n+1})\\
	&+(-1)^{n} f(a_1,\ldots, a_{n-1},a_{n+1}).\notag
\end{align}\end{linenomath*}	
These formulas resemble the group (co)homology construction for $(A,+)$. We will now show that they indeed define a (co)homology theory.

\begin{pro}\label{PR:LinCycleHom}
Let $(A, \cdot, +)$ be a linear cycle set and $\Gamma$ be an abelian group.
\begin{enumerate}
\item The maps $\partial_{\bullet}$ above
\begin{itemize}
\item square to zero: $\partial_{n-1}\partial_{n}=0$ for all $n>1$;
\item induce maps $\RedC_n(A;\Gamma) \to \RedC_{n-1}(A;\Gamma)$;
\item and further restrict to maps $\RedC^{\Degen}_n(A;\Gamma) \to \RedC^{\Degen}_{n-1}(A;\Gamma)$.
\end{itemize}
\item The maps $\partial^{\bullet}$ above 
\begin{itemize}
\item square to zero: $\partial^{n+1}\partial^{n}=0$ for all $n \geqslant 1$;
\item restrict to maps $\RedC^n(A;\Gamma) \to \RedC^{n+1}(A;\Gamma)$;
\item and further restrict to maps $\RedC_{\Norm}^n(A;\Gamma) \to \RedC_{\Norm}^{n+1}(A;\Gamma)$.
\end{itemize}
\end{enumerate}
\end{pro}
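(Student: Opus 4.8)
The plan is to exhibit $\partial_\bullet$ as the alternating sum of presimplicial face operators, deduce the three homological claims from the simplicial identities, and then obtain the cohomological half by a formal dualisation. First I would write $\partial_n=\sum_{i=0}^{n-1}(-1)^i d_i$, where $d_0(a_1,\dots,a_n)=(a_1\cdot a_2,\dots,a_1\cdot a_n)$, where $d_i(a_1,\dots,a_n)=(a_1,\dots,a_i+a_{i+1},\dots,a_n)$ for $1\le i\le n-2$, and where the modified terminal face $d_{n-1}(a_1,\dots,a_n)=(a_1,\dots,a_{n-2},a_n)$ deletes the penultimate entry. I would also record the two elementary consequences of the axioms used repeatedly below: $a\cdot 0=0$, which follows from \eqref{E:LinCyclic}, and $0\cdot a=a$, which follows from \eqref{E:LinCyclic2} together with the injectivity of the left translation by~$0$.

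The core computation is $\partial_{n-1}\partial_n=0$, which I would reduce to the presimplicial identities $d_i d_j=d_{j-1}d_i$ for $0\le i<j\le n-1$ (with the convention that the right-hand factor is applied first); granting these, $\partial_{n-1}\partial_n=0$ follows by the usual pairwise cancellation of the terms of $\sum_{i,j}(-1)^{i+j}d_id_j$. The identities not involving $d_0$ hold formally, by associativity and commutativity of~$+$, exactly as in the bar construction. The families involving $d_0$ are where the linear cycle set structure enters and constitute the main obstacle: $d_0d_i=d_{i-1}d_0$ for $2\le i\le n-2$ is precisely the left distributivity \eqref{E:LinCyclic}; $d_0d_{n-1}=d_{n-2}d_0$ is again formal; and the crucial boundary case $d_0d_1=d_0d_0$ unfolds to $(a_1+a_2)\cdot a_k=(a_1\cdot a_2)\cdot(a_1\cdot a_k)$, which is exactly axiom \eqref{E:LinCyclic2}. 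I expect this last identity to be the only genuinely non-formal point.

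For the remaining bullets of part~(1) I would argue face by face. Well-definedness on $\RedC_n(A;\Gamma)$ amounts to linearity in the last argument: in every face the last input variable $a_n$ survives in the last output slot, untouched except by the additive map $a_1\cdot(-)$ occurring in $d_0$ (where \eqref{E:LinCyclic} is used once more), so $\partial_n$ respects \eqref{E:LinLast}. For the restriction to $\RedC^{\Degen}_n(A;\Gamma)$, suppose $a_k=0$. Then the faces $d_{k-1}$ and $d_k$ produce the same $(n-1)$-tuple, namely the one obtained by omitting $a_k$, so their opposite signs make them cancel; at the boundary this cancellation pairs $d_0$ with $d_1$ when $k=1$ (here $0\cdot a=a$ is needed) and $d_{n-2}$ with $d_{n-1}$ when $k=n-1$. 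Every other face visibly retains a zero entry (for $d_0$ because $a_1\cdot 0=0$), whence $\partial_n$ sends degenerate tuples to combinations of degenerate tuples.

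Finally, part~(2) follows by dualisation, since \eqref{E:LinCycleCoHom} is exactly $\partial^n f=\sum_{j}(-1)^j\, f\circ d_j$ for the level-$(n{+}1)$ faces~$d_j$. Consequently $\partial^{n+1}\partial^n f=f\circ(\partial_{n+2}\text{ then }\partial_{n+1})=0$ by part~(1); linearity in the last coordinate is inherited because each $d_j$ keeps $a_{n+1}$ in the last slot of the argument of $f$; and vanishing on degenerate tuples is inherited through the same cancelling-pair analysis, since on a degenerate input the faces either yield a degenerate tuple (on which $f$ vanishes) or split into a sign-cancelling pair. Thus the cohomological statements require no computation beyond the face-by-face observations already made for homology.
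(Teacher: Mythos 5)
Your proposal is correct and follows essentially the same route as the paper's own proof: the same decomposition of $\partial_n$ into the face maps \eqref{E:LCSCub1}--\eqref{E:LCSCub3}, the same reduction of $\partial_{n-1}\partial_n=0$ to the presimplicial (``almost commutativity'') identities with axiom \eqref{E:LinCyclic2} entering exactly at the boundary case $d_0d_1=d_0d_0$ and \eqref{E:LinCyclic} at $d_0d_i=d_{i-1}d_0$, the same use of Lemma~\ref{L:LinCycleSet0} for the degenerate subcomplex, and the same dualisation for part~(2). You merely spell out details the paper leaves implicit (notably the sign-cancelling pairs $d_{k-1},d_k$ in the degeneracy analysis), and your one deviation---invoking commutativity of~$+$ for the identities not involving $d_0$---is harmless, since associativity alone suffices there.
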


The induced or restricted maps from the proposition will be abusively denoted by the same symbols $\partial_{\bullet}$,\, $\partial^{\bullet}$.
In the proof we shall need the special properties of the zero element of a LCS.
\begin{lem}\label{L:LinCycleSet0}
	In any LCS~$A$, the relations $a\cdot 0=0$, $0\cdot a=a$ hold for all $a \in A$.
\end{lem}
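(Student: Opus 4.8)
The plan is to derive the two identities separately, exploiting the two compatibility axioms \eqref{E:LinCyclic} and \eqref{E:LinCyclic2} together with the bijectivity of the left translations that is built into the cycle set axioms. The first relation will be purely additive, while the second will require the injectivity hypothesis.

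For $a\cdot 0=0$ I would simply specialize the additive linearity \eqref{E:LinCyclic} in the second argument. Setting $b=c=0$ there gives $a\cdot 0=a\cdot(0+0)=a\cdot 0+a\cdot 0$, and cancelling $a\cdot 0$ in the abelian group $(A,+)$ yields $a\cdot 0=0$. In particular, taking $a=0$ records the special case $0\cdot 0=0$, which I will feed into the next step.

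For $0\cdot a=a$ I would specialize the other compatibility \eqref{E:LinCyclic2}. Putting $a=b=0$ (and renaming the remaining free variable to $c$) gives $0\cdot c=(0\cdot 0)\cdot(0\cdot c)$; substituting $0\cdot 0=0$ from the first step turns this into $0\cdot c=0\cdot(0\cdot c)$. Now I invoke the injectivity of the left translation $\lambda_0\colon x\mapsto 0\cdot x$, which holds because the left translations of a cycle set are bijective: the equation reads $\lambda_0(c)=\lambda_0(\lambda_0(c))$, so injectivity forces $c=\lambda_0(c)=0\cdot c$.

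The computation is short, and there is no genuine obstacle. The only point worth flagging is that, unlike the first identity, the second really does need the bijectivity assumption rather than just the additive axioms; note also that the cycle set relation \eqref{E:Cyclic} itself is never used.
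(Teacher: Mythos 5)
Your proof is correct and follows essentially the same route as the paper: both derive $a\cdot 0=0$ by setting $b=c=0$ in \eqref{E:LinCyclic} and cancelling in $(A,+)$, and both derive $0\cdot a=a$ by specializing \eqref{E:LinCyclic2} at $0+0$, using $0\cdot 0=0$, and invoking the injectivity of the left translation by~$0$. The only difference is cosmetic: you make explicit the intermediate fact $0\cdot 0=0$, which the paper uses silently.
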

\begin{proof}
		By the LCS axioms, one has $a\cdot 0=a\cdot
		(0+0)=a\cdot 0+a\cdot 0$ and hence $a\cdot 0=0$. Similarly, $0\cdot
		a=(0+0)\cdot a=(0\cdot 0)\cdot (0\cdot a)=0\cdot (0\cdot a)$, and the relation $a = 0\cdot a$ follows by cancelling out~$0$ (recall that the left translation $0 \cdot $ is bijective).
\end{proof}

\begin{proof}[Proof of Proposition~\ref{PR:LinCycleHom}]
	We treat only the homological statements here; they imply the
	cohomological ones by duality.

	The maps~$\partial_{n}$ can be presented as signed sums $\partial_{n} =
	\sum_{i=0}^{n-1} (-1)^i \partial_{n;i}$, where 
	\begin{linenomath*}\begin{align}
		\partial_{n;0}(a_1,\ldots,a_{n}) &= (a_1 \cdot a_2,\ldots,a_1 \cdot a_{n}),\label{E:LCSCub1}\\
		\partial_{n;i}(a_1,\ldots,a_{n}) &=  (a_1,\ldots,a_i+a_{i+1},\dots,a_{n}), \qquad 1 \leqslant i \leqslant n-2,\label{E:LCSCub2}\\
		\partial_{n;n-1}(a_1,\ldots,a_{n}) &= (a_1,\ldots,a_{n-2},a_n).\label{E:LCSCub3}
	\end{align}\end{linenomath*}
	The relation $\partial_{n-1}\partial_{n}=0$ then classically reduces to the
	``almost commutativity'' 
	$\partial_{n-1;j}\partial_{n;i}=\partial_{n-1;i}\partial_{n;j+1}$ for all
	$i \leqslant j$. In the case $i > 0$ this latter relation is either
	tautological, or follows from the associativity of~$+$. For $i = 0 <j$, it
	follows from the left distributivity~\eqref{E:LinCyclic} for~$A$. 
	For $i = 0 = j$, it is a consequence of the second LCS
	relation~\eqref{E:LinCyclic2} for~$A$. 

	Further, using the linearity~\eqref{E:LinCyclic} of the left translations $a_{n} \mapsto a_1 \cdot a_{n}$, one sees that when applied to
	expressions of type \[
	(\ldots,a_{n-1}, a_n + a'_n) - (\ldots,a_{n-1}, a_n) -
	(\ldots,a_{n-1}, a'_n),
	\]
	all the maps~$\partial_{n;i}$ yield expressions of the
	same type. Hence their signed sums~$\partial_{\bullet}$ induce a differential
	on $\RedC_{\bullet}$. The possibility to further restrict to
	$\RedC^{\Degen}_{\bullet}$ is guaranteed by 
	Lemma~\ref{L:LinCycleSet0}.
\end{proof}

Proposition~\ref{PR:LinCycleHom} legitimizes the following definition:

\begin{defn}\label{D:CycleSetHom}
	The \emph{reduced (normalized) cycles} / \emph{boundaries} / \emph{homology
	groups of a linear cycle set $(A,\cdot,+)$} with coefficients in an abelian
	group~$\Gamma$ are the cycles / boundaries / homology groups of the chain
	complex $(\RedC_{\bullet}(A;\Gamma), \partial_{\bullet})$ (respectively,
	$(\RedC^{\Norm}_{\bullet}(A;\Gamma), \partial_{\bullet})$) above. Dually,
	the \emph{reduced (normalized) cocycles} / \emph{coboundaries} /
	\emph{cohomology groups of $(A,\cdot,+)$} are those of the complex
	$(\RedC^{\bullet}(A;\Gamma), \partial^{\bullet})$ (respectively,
	$(\RedC_{\Norm}^{\bullet}(A;\Gamma), \partial^{\bullet})$). We use the
	usual notations $\RedQ_n(A;\Gamma)$, $\RedQ^{\Norm}_n(A;\Gamma)$,
	$\RedQ^n(A;\Gamma)$, $\RedQ_{\Norm}^n(A;\Gamma)$, where $Q$ is one of
	the letters $Z$, $B$, or $H$.
\end{defn}

\begin{rem}
	In the proof of Proposition~\ref{PR:LinCycleHom} we actually showed that
	our (co)homo\-logy constructions can be refined into (co)simplicial ones.
\end{rem}

\begin{example}\label{EX:H1}
	Recall from the introduction that for a non-degenerate cycle set
	$(X,\cdot)$, the free abelian group $(\Z^{(X)},+)$ can be seen as a linear
	cycle set, with the cycle set operation induced from~$\cdot$. In this case
	$\RedC_1(\Z^{(X)};\Gamma)$ is simply the abelian group $\Gamma \otimes_{\Z}
	\Z^{(X)} = \Gamma^{(X)}$, and for $a_1,a_2 \in \Z^{(X)}$ one calculates  				 	$\partial_1 (a_1 , a_2) = a_1 \cdot a_2 - a_2$. Standard arguments
	from LCS theory then yield $$\RedH_1(\Z^{(X)};\Gamma)
	\cong \Gamma^{(\Orb(X))},$$ where $\Orb(X)$ is the set of \emph{orbits} of~$X$,
	i.e., classes for the equivalence relation generated by $a_1 \cdot a_2 \sim
	a_2$ for all $a_1,a_2 \in X$. Similarly, one calculates the first reduced
	cohomology group: $$\RedH^1(\Z^{(X)};\Gamma) \cong \Fun(\Orb(X),\Gamma).$$
\end{example}

We finish with a comparison between the (co)homology of a LCS $(A, \cdot, +)$ and the (co)homology of its underlying cycle set $(A, \cdot)$, as defined in~\cite{LV}. Recall that the homology $H^{\CS}_{n}(A;\Gamma)$ of $(A, \cdot)$ is computed by the complex $(\Gamma^{(A^{\times n})} ,\,\partial^{\CS}_{n})$, where
\begin{linenomath*}\begin{align*}
	\partial^{\CS}_{n}(a_1,\ldots,a_{n})= \sum\nolimits_{i=1}^{n-1}(-1)^{i-1} ((a_i &\cdot a_{1},\ldots,a_i \cdot a_{i-1},a_i \cdot a_{i+1},\ldots,a_i \cdot a_{n})\\
	-&(a_1,\ldots,a_{i-1},a_{i+1},\ldots,a_{n})).
\end{align*}\end{linenomath*}
Dually, the cohomology $H_{\CS}^{n}(A;\Gamma)$ of $(A, \cdot)$ is computed by the complex $(\Fun(A^{\times n},\Gamma) ,$ $\,\partial_{\CS}^{n})$, 
 with $\partial_{\CS}^{n} f = f \circ \partial^{\CS}_{n+1}$. Define the map $S_n \colon \Gamma^{(A^{\times n})} \to \RedC_n(A;\Gamma)$ as the composition of the anti-symmetrization map
$$ \gamma(a_1,\ldots,a_{n}) \mapsto \sum_{\sigma \in Sym_{n-1}} (-1)^{\sigma} \gamma(a_{\sigma(1)},\ldots,a_{\sigma(n-1)},a_{n})$$
(where $(-1)^{\sigma}$ is the sign of the permutation~$\sigma$) and the obvious projection $\Gamma^{(A^{\times n})} \twoheadrightarrow \RedC_n(A;\Gamma)$.

\begin{pro}\label{PR:LCSandCShom}
Let $(A, \cdot, +)$ be a linear cycle set and $\Gamma$ be an abelian group. The map~$S$ defined above yields a map of chain complexes 
$$S_n \colon \, (\Gamma^{(A^{\times n})} ,\,\partial^{\CS}_{n}) \, \to \, (\RedC_n(A;\Gamma), \, \partial_{n}).$$ 
\end{pro}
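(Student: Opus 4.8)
The plan is to prove the chain-map identity $\partial_n \circ S_n = S_{n-1}\circ \partial^{\CS}_n$ by lifting it to the free groups $\Gamma^{(A^{\times n})}$, before any quotient is taken. Write $\alpha_n\colon \Gamma^{(A^{\times n})}\to\Gamma^{(A^{\times n})}$ for the antisymmetrization operator (the first map in the definition of $S_n$), so that $S_n=\pi_n\circ\alpha_n$ with $\pi_n\colon\Gamma^{(A^{\times n})}\twoheadrightarrow\RedC_n$ the projection. Since, by Proposition~\ref{PR:LinCycleHom}, the formula~\eqref{E:LinCycleHom} descends through $\pi_{n-1}$ to the induced differential on $\RedC$, it suffices to establish the stronger identity $\partial_n\circ\alpha_n=\alpha_{n-1}\circ\partial^{\CS}_n$ on $\Gamma^{(A^{\times n})}$ (with $\partial_n$ read as the formula~\eqref{E:LinCycleHom}) and then compose with $\pi_{n-1}$.

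First I would expand $\partial_n\alpha_n(\gamma(a_1,\dots,a_n))$ using~\eqref{E:LCSCub1}--\eqref{E:LCSCub3}, keeping track of the three kinds of summands produced from each permuted tuple $(a_{\sigma(1)},\dots,a_{\sigma(n-1)},a_n)$. The crucial simplification is that \emph{all merging terms vanish after antisymmetrization}: for a fixed merge index $1\leqslant i\leqslant n-2$, pairing $\sigma$ with $\sigma\circ(i\ i{+}1)$ (a transposition legitimately lying in $\Sym_{n-1}$, as $i+1\leqslant n-1$) reverses the sign $(-1)^\sigma$ while, by commutativity of~$+$, leaving the merged tuple $(\dots,a_{\sigma(i)}+a_{\sigma(i+1)},\dots,a_n)$ unchanged; hence these terms cancel in pairs, already at the level of basis elements of $\Gamma^{(A^{\times(n-1)})}$. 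What survives is the action term $(a_{\sigma(1)}\cdot a_{\sigma(2)},\dots,a_{\sigma(1)}\cdot a_n)$ and the deletion term $(-1)^{n-1}(a_{\sigma(1)},\dots,a_{\sigma(n-2)},a_n)$.

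It then remains to reorganize the two surviving families and recognize them as $\alpha_{n-1}\partial^{\CS}_n$. For the action part I would group the terms by the value $\ell=\sigma(1)$ of the acting element: as the remaining indices $\sigma(2),\dots,\sigma(n-1)$ run over all orderings of $\{1,\dots,n-1\}\setminus\{\ell\}$, and all appear under $a_\ell\cdot(-)$ with $a_\ell\cdot a_n$ frozen last, one obtains exactly $\alpha_{n-1}$ applied to $T_\ell:=(a_\ell\cdot a_1,\dots,a_\ell\cdot a_{\ell-1},a_\ell\cdot a_{\ell+1},\dots,a_\ell\cdot a_n)$, the sign of the representative with $\sigma(1)=\ell$ contributing the factor $(-1)^{\ell-1}$. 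Summing over $\ell$ reproduces the action part $\sum_i(-1)^{i-1}\alpha_{n-1}(T_i)$ of $\alpha_{n-1}\partial^{\CS}_n$. For the deletion part I would group by the deleted value $k=\sigma(n-1)$: the representative sign factorizes as $(-1)^{n-1-k}$ times the sign of an independent antisymmetrization of the $n-2$ surviving entries, so that the terms assemble into $\alpha_{n-1}(D_k)$ with $D_k:=(a_1,\dots,a_{k-1},a_{k+1},\dots,a_n)$, carrying total sign $(-1)^{n-1}\cdot(-1)^{n-1-k}=(-1)^{k}$. This matches the deletion part $\sum_i(-1)^{i}\alpha_{n-1}(D_i)$ of $\partial^{\CS}_n$, and the identity follows.

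The only genuinely delicate point is this sign bookkeeping in the two regroupings, and in particular verifying that the fixed factor $(-1)^{n-1}$ carried by the LCS deletion term combines with the coset-representative sign to recover precisely the alternating signs $(-1)^{i}$ of the cycle set differential. I expect the cleanest way to settle it is to fix explicit coset representatives---the permutation sending position~$1$ to $\ell$ (resp. position $n-1$ to $k$) and leaving the remaining values in increasing order---and to count their inversions, namely $\ell-1$ (resp. $n-1-k$). One should also keep in mind the compatibility of $\alpha_n$ and $\alpha_{n-1}$: both freeze the last coordinate, which is exactly why the surviving terms retain $a_n$ (resp. $a_\ell\cdot a_n$) in the last slot and are therefore in the image of $\alpha_{n-1}$.
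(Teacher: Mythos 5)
Your proposal is correct and follows essentially the same route as the paper's proof: the same decomposition $\partial_n=\sum_{i=0}^{n-1}(-1)^i\partial_{n;i}$ from \eqref{E:LCSCub1}--\eqref{E:LCSCub3}, the same cancellation of the merging terms $\partial_{n;i}$ ($0<i<n-1$) by pairing $\sigma$ with $\sigma\circ(i\ i{+}1)$ via commutativity of~$+$, and the same regrouping of the $\partial_{n;0}$ and $\partial_{n;n-1}$ terms with representative signs $(-1)^{\ell-1}$ and $(-1)^{n-1-k}$, which is exactly the paper's ``careful sign inspection.'' Your only (harmless) refinement is to carry out the computation at the level of the antisymmetrization $\alpha_n$ on $\Gamma^{(A^{\times n})}$ before projecting, invoking Proposition~\ref{PR:LinCycleHom} to descend, which the paper does implicitly.
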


\begin{proof}
One has to compare the evaluations of the maps $\partial_{n} S_n$ and $S_{n-1} \partial^{\CS}_{n}$ on $\gamma(a_1,\ldots,a_{n})$. For this it is convenient to use the decomposition $\partial_{n} = \sum_{i=0}^{n-1} (-1)^i \partial_{n;i}$ from \eqref{E:LCSCub1}-\eqref{E:LCSCub3}. For $0 < i < n-1$, the map $\partial_{n;i}S_n$ is zero: in its evaluation, the terms $\pm \gamma(\ldots, a_{j}+a_{k}, \ldots)$ and $\mp \gamma(\ldots, a_{k}+a_{j}, \ldots)$ (with the sum at the $i$th position) cancel. A careful sign inspection yields 
\begin{linenomath*}\begin{align*}
\partial_{n;0}S_n (\gamma&(a_1,\ldots,a_{n}))= \\
&\sum\nolimits_{i=1}^{n-1}(-1)^{i-1} S_{n-1} (\gamma (a_i \cdot a_{1},\ldots,a_i \cdot a_{i-1},a_i \cdot a_{i+1},\ldots,a_i \cdot a_{n})),\\
\partial_{n;n-1}S_n (\gamma&(a_1,\ldots,a_{n}))=\\
&\sum\nolimits_{i=1}^{n-1}(-1)^{n-1-i} S_{n-1} (\gamma (a_{1},\ldots, a_{i-1}, a_{i+1},\ldots, a_{n})),
\end{align*}\end{linenomath*}
hence the maps $\partial_{n} S_n$ and $S_{n-1} \partial^{\CS}_{n}$ coincide.
\end{proof}

As a consequence, one obtains the dual map 
$$S^n \colon \, (\RedC^n(A;\Gamma), \, \partial^{n}) \,\to\, (\Fun(A^{\times n},\Gamma) ,\,\partial_{\CS}^{n})$$ 
of cochain complexes, and the induced maps in (co)homology.

\section{Cycle-type extensions vs. reduced $2$-cocycles}\label{S:Ext}

We now turn to a study of the \emph{reduced $2$-cocycles} of a linear cycle set
$(A, \cdot,+)$, i.e., maps $f\colon A\times A\to \Gamma$ (where $\Gamma$ is an abelian group) satisfying
\begin{linenomath*}\begin{align}
	f(a,b+c) &= f(a,b) + f(a,c), \label{E:2Cocycle}\\
	f(a+b,c) &= f(a \cdot b,a \cdot c) + f(a,c)\label{E:2Cocycle2}
\end{align}\end{linenomath*}
for all $a,b,c\in A$. The last relation, together with the commutativity of~$+$, yields
\begin{linenomath*}\begin{align}\label{E:CS2Cocycle}
	f(a \cdot b,a \cdot c) + f(a,c) &= f(b \cdot a,b \cdot c) + f(b,c),
\end{align}\end{linenomath*}
implying $\partial_{\CS}^{2}(f)=0$, so our~$f$ is necessarily a cocycle of the cycle set $(A, \cdot)$.

Among the reduced $2$-cocycles we distinguish the \emph{reduced $2$-coboundaries}
\begin{linenomath*}\begin{align*}
	\partial^1(\theta)(a,b) &= \theta(a \cdot b) - \theta(b),
\end{align*}\end{linenomath*} 
where the map $\theta \colon A \to \Gamma$ is linear.

\begin{example}
	Let $A$ and $\Gamma$ be abelian groups. Consider the \emph{trivial linear cycle
	set} structure $a\cdot_{tr} b=b$ over $A$. A map $f\colon A\times A\to\Gamma$ is a reduced $2$-cocycle of this LCS if and only if $f$ is a \emph{bicharacter}, in the sense of the bilinearity relations
	\[
		f(a+b,c)=f(a,c)+f(b,c),\quad
		f(a,b+c)=f(a,b)+f(a,c).
	\]
	The reduced $2$-coboundaries are all trivial in this case. Thus $\RedH^2(A;\Gamma)$ is the abelian group of bicharacters of~$A$ with values in~$\Gamma$. Observe that for the cycle set $(A,\cdot_{tr})$, all the differentials $\partial_{\CS}^{n}$ vanish. The second cohomology group $H_{\CS}^2(A;\Gamma)$ of this cycle set thus comprises all the maps $f\colon A\times A\to\Gamma$, and is strictly larger than $\RedH^2(A;\Gamma)$.
\end{example}

\begin{example}
	Let $A=\{0,1,2,3\}=\Z/4$ be the cyclic group of $4$
	elements written additively. Then $A$ is a brace with 
	\begin{linenomath*}\begin{align*}
	a \circ b &= a+b+2ab, & a^{-1} &= (2a-1)a.
	\end{align*}\end{linenomath*}
	The corresponding linear cycle set structure on~$A$ is given by the
	operation 
	\begin{linenomath*}\begin{align*}
	a \cdot b &= a^{-1} \circ (a+b) = (1+2a)b,
	\end{align*}\end{linenomath*}
	which is $b$ when one of $a$, $b$ is even, and $b+2$ otherwise. Take $\Gamma=\{0,1\} = \Z/2$. For a map $f\colon \Z/4 \times \Z/4 \to \Z/2$, relation~\eqref{E:2Cocycle} means that $f$ is of the form $f(a,b) = b \phi(a)$ (where the product is taken in~$\Z/2$, and $b$ is reduced modulo~$2$), for some $\phi \colon \Z/4 \to \Z/2$. Relation~\eqref{E:2Cocycle2} then translates as
	$$\phi(a+b) = \phi(b + 2ab) + \phi(a).$$
The substitution $b=0$ yields $\phi(0)=0$. Analyzing other values of $a$ and $b$, one sees that $\phi(1)$ and $\phi(3)$ can be chosen arbitrarily, and $\phi(2)$ has to equal $\phi(1)+\phi(3)$. The reduced $2$-coboundaries are again trivial: a linear map $\theta \colon \Z/4 \to \Z/2$ is necessarily of the form $\theta(a)=at$ for some constant $t \in \Z/2$, yielding 
$$\theta(a \cdot b)= (a \cdot b)t =  (1+2a)bt = bt = \theta(b)$$
(since $2a = 0$ in $\Z/2$). Summarizing, one gets 
$$\RedH^2(\Z/4; \Z/2) \,\simeq\, \Z/2 \times \Z/2.$$	
Let us now turn to the underlying cycle set $(\Z/4, \cdot)$. Playing with condition~\eqref{E:CS2Cocycle}, one verifies that its $2$-cocycles are maps $f\colon \Z/4 \times \Z/4 \to \Z/2$ verifying 3 linear relations
\begin{linenomath*}\begin{align*}
&f(0,1) + f(0,3) = 0, \\
&f(2,1) + f(2,3) = 0,\\
&f(1,1) + f(1,3) + f(3,1) + f(3,3) = 0.
\end{align*}\end{linenomath*}
Its only non-trivial $2$-coboundary is $f(a,b)=ab \mod 2$. This implies
\begin{linenomath*}\begin{align*}
Z_{\CS}^2(\Z/4; \Z/2) &\simeq (\Z/2)^{4\times 4 -3}=(\Z/2)^{13}, \\
H_{\CS}^2(\Z/4; \Z/2) &\simeq (\Z/2)^{12}.
\end{align*}\end{linenomath*}
\end{example}


We will now construct extensions of our LCS~$A$ by~$\Gamma$  out of
$2$-cocycles, show that any central cycle-type extension is isomorphic to one
of this type, and that reduced $2$-cocycles, modulo reduced $2$-coboundaries, classify such
extensions.  

\begin{lem}\label{L:ExtFromCocycle}
	Let $(A, \cdot, +)$ be a linear cycle set, $\Gamma$ be an abelian group,
	and $f\colon A\times A\to \Gamma$ be a map. Then the abelian group $\Gamma
	\oplus A$ with the operation $$(\gamma,a) \cdot (\gamma',a') =
	(\gamma'+f(a,a'),\, a\cdot a'), \qquad \gamma,\gamma' \in \Gamma,\; a,a' \in
	A$$ is a linear cycle set if and only if $f$ is a reduced $2$-cocycle,
	$f\in \RedZ^2(A;\Gamma)$. 
\end{lem}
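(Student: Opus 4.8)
The plan is to verify the linear-cycle-set axioms for $(\Gamma \oplus A, +, \cdot)$---where $+$ is the componentwise sum and $\cdot$ is the prescribed operation---one axiom at a time, and to read off which constraint on $f$ each axiom produces. Writing elements as pairs $(\gamma, a)$, I would first dispose of the requirement that left translations be bijective: the map $(\gamma', a') \mapsto (\gamma' + f(a, a'),\, a \cdot a')$ is a bijection for \emph{every} $f$, since $a \cdot -$ is bijective on $A$ and, once $a'$ is thereby recovered, $\gamma' \mapsto \gamma' + f(a, a')$ is a bijection of $\Gamma$. So this requirement costs nothing, independently of $f$.

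I would then handle the two compatibility axioms by expanding both sides and comparing components. For left distributivity \eqref{E:LinCyclic}, the second components agree by distributivity in $A$, and the first components agree if and only if $f(a, b+c) = f(a,b) + f(a,c)$, i.e.\ exactly when \eqref{E:2Cocycle} holds. For \eqref{E:LinCyclic2}, the second components agree by \eqref{E:LinCyclic2} in $A$, and the first components agree if and only if $f(a+b, c) = f(a \cdot b,\, a \cdot c) + f(a, c)$, i.e.\ exactly when \eqref{E:2Cocycle2} holds. Both computations are routine, and each is a clean equivalence precisely because the second-component equalities already hold unconditionally (as $A$ is itself a LCS).

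The delicate step is the cycle set identity \eqref{E:Cyclic}, which a priori might force a third condition on $f$. Expanding both sides, the second components coincide by \eqref{E:Cyclic} in $A$, while the first components coincide if and only if the cycle-type identity \eqref{E:CS2Cocycle} holds. The point I would stress is that \eqref{E:CS2Cocycle} is \emph{not} a new constraint: as already noted before the lemma, it is a consequence of \eqref{E:2Cocycle2} together with the commutativity $a + b = b + a$ of the abelian group (apply \eqref{E:2Cocycle2} to $a+b$ and to $b+a$ and equate). Hence \eqref{E:Cyclic} imposes nothing beyond \eqref{E:2Cocycle2}, and no extra cocycle condition appears.

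Collecting the equivalences, $\Gamma \oplus A$ is a linear cycle set if and only if $f$ satisfies both \eqref{E:2Cocycle} and \eqref{E:2Cocycle2}, which is exactly the statement that $f \in \RedZ^2(A; \Gamma)$. The main obstacle is conceptual rather than computational: one must check \eqref{E:Cyclic} carefully without prematurely introducing a separate requirement, and correctly recognize that the identity it yields collapses, via commutativity of $+$, to the already-guaranteed \eqref{E:CS2Cocycle}.
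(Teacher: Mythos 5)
Your proof is correct and follows essentially the same route as the paper: left-translation bijectivity is free, axioms \eqref{E:LinCyclic} and \eqref{E:LinCyclic2} for $\Gamma\oplus_f A$ are componentwise equivalent to \eqref{E:2Cocycle} and \eqref{E:2Cocycle2}, and the cycle set identity \eqref{E:Cyclic} costs nothing extra because it reduces, via commutativity of~$+$, to a consequence of \eqref{E:2Cocycle2}. The only cosmetic difference is that the paper dispatches \eqref{E:Cyclic} structurally (it holds in any structure satisfying \eqref{E:LinCyclic2} with commutative~$+$, applied to $\Gamma\oplus_f A$ itself), whereas you verify the same fact componentwise through \eqref{E:CS2Cocycle}.
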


\begin{notation}\label{N:Ext}
The LCS from the lemma is denoted by $\Gamma \oplus_f A$.
\end{notation}

\begin{proof}
	The left translation invertibility for $\Gamma \oplus_f A$ follows from to the
	same property for~$A$.  Properties~\eqref{E:LinCyclic} and~\eqref{E:LinCyclic2}
	are equivalent for $\Gamma \oplus_f A$ to,
	respectively, properties~\eqref{E:2Cocycle} and~\eqref{E:2Cocycle2} from the
	definition of a $2$-cocycle for~$f$. The cycle set property~$\eqref{E:Cyclic}$
	follows from~\eqref{E:LinCyclic2} and the commutativity of~$+$. 
\end{proof}

Lemma~\ref{L:ExtFromCocycle} and the correspondence between linear cycle sets and braces yield the following result.
\begin{lem}\label{L:BraceTransl}
	Let $(A, \circ, +)$ be a brace, $\Gamma$ be an abelian group, and $f\colon A\times A\to \Gamma$ be a map. Then the abelian group $\Gamma \oplus A$ with the product
$$(\gamma,a) \circ (\gamma',a')=(\gamma+\gamma'+f(a,a'),a \circ a'),\qquad \gamma,\gamma'\in\Gamma,\; a,a'\in A,$$
	is a brace if and only if for the corresponding linear cycle set $(A, \cdot, +)$, the map $\overline{f}(a,b) = f(a, a \cdot b)$ is a reduced $2$-cocycle. 
\end{lem}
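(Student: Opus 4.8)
The plan is to reduce Lemma~\ref{L:BraceTransl} to Lemma~\ref{L:ExtFromCocycle} by comparing the two extension constructions under the brace--LCS dictionary $a\cdot b = a^{-1}\circ(a+b)$. First I would make explicit the abelian group and brace structures at play. On $\Gamma\oplus A$ both statements use the \emph{same} abelian group $(\Gamma\oplus A,+)$, since $\Gamma$ is central and the $+$-operation is the direct sum in both cases. So the only thing to track is how the $\circ$-product of the lemma corresponds to a $\cdot$-product of the form appearing in Lemma~\ref{L:ExtFromCocycle}, and to check that the distinguished element of $\Gamma\oplus A$ with respect to $\circ$ is still $(0,0)$.

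The key computation is to pass from the given $\circ$ on $\Gamma\oplus A$ to its associated cycle-set operation via the universal formula $X\cdot Y = X^{-1}\circ(X+Y)$, where $X^{-1}$ is the $\circ$-inverse. Writing $X=(\gamma,a)$ and $Y=(\gamma',a')$, I would first determine $(\gamma,a)^{-1}$: since the product is $(\gamma,a)\circ(\gamma',a') = (\gamma+\gamma'+f(a,a'),\,a\circ a')$ with $\Gamma$ central and additive, the inverse has first coordinate governed by $a^{-1}$ and a correction term $-f(a,a^{-1})$, i.e. $(\gamma,a)^{-1} = (-\gamma - f(a,a^{-1}),\,a^{-1})$, using that $a\circ a^{-1}=0$. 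Then I would compute
\begin{linenomath*}\begin{align*}
(\gamma,a)\cdot(\gamma',a') = (\gamma,a)^{-1}\circ\bigl((\gamma,a)+(\gamma',a')\bigr) = (\gamma,a)^{-1}\circ(\gamma+\gamma',\,a+a').
\end{align*}\end{linenomath*}
Expanding the right-hand side through the definition of $\circ$ and collecting the $\Gamma$-component, the terms $\pm\gamma$ and $\pm f(a,a^{-1})$ should cancel or combine, leaving the second coordinate equal to $a^{-1}\circ(a+a') = a\cdot a'$ and the first coordinate equal to $\gamma' + \bigl(f(a^{-1},a+a') - f(a,a^{-1})\bigr)$. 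The bracketed quantity is a function of $a$ and $a'$ only; I claim a short manipulation using the brace relation~\eqref{E:Brace} (equivalently the LCS distributivity) identifies it with $f(a,\,a\cdot a') = \overline{f}(a,a')$.

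Once this identification is made, the $\cdot$-operation on $\Gamma\oplus A$ is exactly $(\gamma,a)\cdot(\gamma',a') = (\gamma'+\overline{f}(a,a'),\,a\cdot a')$, which is precisely the operation of Lemma~\ref{L:ExtFromCocycle} applied to the LCS $(A,\cdot,+)$ with the map $\overline{f}$. By that lemma, $\Gamma\oplus A$ with this $\cdot$ is a linear cycle set if and only if $\overline{f}\in\RedZ^2(A;\Gamma)$. Finally, I would invoke the equivalence of categories between braces and linear cycle sets (Rump's correspondence, recalled in the introduction): $\Gamma\oplus A$ is a brace under $\circ$ precisely when the associated structure $\Gamma\oplus A$ under $\cdot$ is an LCS, because the two structures determine each other functorially and $+$ is unchanged. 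Combining these two equivalences gives the stated criterion.

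The main obstacle I anticipate is the sign-- and term--bookkeeping in the second paragraph, specifically verifying that the correction terms assemble into exactly $\overline{f}(a,a')=f(a,a\cdot a')$ rather than some twisted variant. The centrality and additivity of $\Gamma$ keep the first-coordinate arithmetic linear, which helps, but one must use the brace compatibility~\eqref{E:Brace} carefully to rewrite $f(a^{-1},a+a')$ in terms of $f(a,a\cdot a')$; getting this single algebraic identity right is the crux, and everything else is a direct transcription through the established brace--LCS dictionary.
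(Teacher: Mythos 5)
Your overall strategy---running Rump's dictionary and reducing to Lemma~\ref{L:ExtFromCocycle}---is exactly the paper's (the paper states this lemma as an immediate consequence of Lemma~\ref{L:ExtFromCocycle} and the brace/LCS correspondence, and proves the full analogue, Lemma~\ref{L:BraceTranslFull}, in just this way). But the step you yourself identify as the crux does not work as you describe, and this is a genuine gap. The bracketed quantity your computation produces is $h(a,a')=f(a^{-1},a+a')-f(a,a^{-1})$, and for an \emph{arbitrary} map $f\colon A\times A\to\Gamma$ there is no identity relating this to $f(a,a\cdot a')$: relation~\eqref{E:Brace} (equivalently the LCS distributivity) constrains elements of~$A$, not values of the unconstrained function~$f$ at unrelated arguments. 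The identification only becomes available \emph{after} assuming that $\Gamma\oplus A$ is a brace: associativity of~$\circ$ forces $f$ to satisfy the group $2$-cocycle condition $f(a,b)+f(a\circ b,c)=f(b,c)+f(a,b\circ c)$ for $(A,\circ)$, and the shared-neutral-element property of braces forces $f(0,x)=f(x,0)=0$; substituting $(a,a^{-1},a+a')$ into the cocycle condition then yields $f(a^{-1},a+a')-f(a,a^{-1})=-f(a,a\cdot a')$. Note the minus sign: your claimed identity is also off by a sign (compare Lemma~\ref{L:BraceTranslFull}, where $\overline{f}(a,b)=-f(a,a\cdot b)+g(a,b)$). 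The sign itself is harmless for the ``if and only if'', since $\RedZ^2(A;\Gamma)$ is closed under negation, but the derivation must invoke the brace axioms on $\Gamma\oplus A$, not relation~\eqref{E:Brace}.

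This creates a second, structural problem: since $h=-\overline{f}$ is valid only under the hypothesis that $\Gamma\oplus A$ is a brace (and since the formula $X\cdot Y=X^{-1}\circ(X+Y)$ itself presupposes that $\circ$ admits inverses), your argument as written establishes only the forward implication. For the converse you cannot simply ``invoke the equivalence functorially''; you must run the dictionary in the opposite direction, as the paper does in the proof of Lemma~\ref{L:BraceTranslFull}. Concretely: given that $\overline{f}\in\RedZ^2(A;\Gamma)$, observe that $-\overline{f}\in\RedZ^2(A;\Gamma)$, form the LCS $\Gamma\oplus_{-\overline{f}}A$ of Lemma~\ref{L:ExtFromCocycle}, and compute its associated brace product via $X\circ Y=X+X\ast Y$, where $\ast$ inverts the left translations; since $\overline{f}(a,a\ast a')=f(a,a\cdot(a\ast a'))=f(a,a')$, this product is exactly the one in the statement, so $\Gamma\oplus A$ with that product is a brace. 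With these two repairs (deriving the identity from associativity plus normalization in one direction, and reversing the dictionary in the other) your outline becomes a complete proof.
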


Before introducing the notion of LCS extensions, we need some preliminary definitions.

\begin{defn}\label{D:Central}
A \emph{morphism} between linear cycle sets $A$ and $B$ is a map $\varphi \colon A\to B$ preserving the structure, i.e. for all $a,a'\in A$ one has $\varphi(a+a')=\varphi(a)+\varphi(a')$ and $\varphi(a \cdot a')=\varphi(a) \cdot \varphi(a')$. The \emph{kernel} of~$\varphi$ is defined by $\Ker \varphi = \varphi^{-1}(0)$. The notions of the \emph{image} $\Im \varphi = \varphi(A)$, of a \emph{short exact sequence} of linear cycle sets, and of \emph{linear cycle subsets}, are defined in the obvious way. A linear cycle subset~$A'$ of~$A$ is called \emph{central} if for all $a \in A$, $a' \in A'$, one has $a \cdot a' = a'$ and $a' \cdot a = a$.
\end{defn}

For a LSC morphism $\varphi \colon A\to B$, $\Ker \varphi$ and $\Im \varphi$ are clearly linear cycle subsets of~$A$ and~$B$ respectively. Lemma~\ref{L:LinCycleSet0} can be rephrased by stating that $\{0\}$ is a central linear cycle subset of~$A$.

\begin{defn}
A \emph{central cycle-type extension} of a linear cycle set $(A, \cdot, +)$ by an abelian group $\Gamma$ is the datum of a short exact sequence of linear cycle sets
\begin{linenomath*}\begin{align}\label{E:ShortExactSq}
&0 \to \Gamma \overset{\iota}{\to} E \overset{\pi}{\to} A \to 0,
\end{align}\end{linenomath*}
where $\Gamma$ is endowed with the {trivial} cycle set structure $\gamma \cdot \gamma' = \gamma'$, its image $\iota (\Gamma)$ is central in~$E$ (in the sense of Definition~\ref{D:Central}), and the short exact sequence of abelian groups underlying~\eqref{E:ShortExactSq} splits.
\end{defn}

The adjective \emph{cycle-type} refers here to the fact that our extensions are interesting on the level of the cycle set operation~$\cdot$ only, and trivial on the level of the additive operation~$+$, since we require the short exact sequences to linearly split. More general extensions---those taking into account the additive operation as well---are postponed until the next section. Cycle-type extensions are important, for example, for comparing the LCS structures on the structure group of a cycle set before and after a cycle set extension (cf. the introduction for more detail on structure groups, and~\cite{LV} for the cycle set extension theory).

The LCS $\Gamma \oplus_f A$ from Lemma~\ref{L:ExtFromCocycle} is an extension of~$A$ by~$\Gamma$ in the obvious way. We now show that this example is essentially exhaustive.

\begin{defn}\label{D:ExtEquiv}
Two central cycle-type LCS extensions $\Gamma \overset{\iota}{\rightarrowtail} E \overset{\pi}{\twoheadrightarrow} A$ and $\Gamma \overset{\iota'}{\rightarrowtail} E' \overset{\pi'}{\twoheadrightarrow} A$ are called \emph{equivalent} if there exists a LCS isomorphism $\varphi \colon E \to E'$ making the following diagram commute:
\begin{equation}\label{E:ExtEquiv}
\begin{gathered}
\xymatrix@!0 @R=0.8cm @C=2cm{
& E \ar@{->>}[dr]^{\pi} \ar[dd]^{\varphi}_{\sim} & \\
\Gamma\ar@{ >->}[ur]^{\iota} \ar@{ >->}[dr]^{\iota'} & & A  \\
& E' \ar@{->>}[ur]^{\pi'} &}
\end{gathered}
\end{equation}
The set of equivalence classes of central cycle-type extensions of~$A$ by~$\Gamma$ is denoted by $\CTExt(A,\Gamma)$.
\end{defn}

\begin{lem}\label{L:section->cocycle}
Let $\Gamma \overset{\iota}{\rightarrowtail} E \overset{\pi}{\twoheadrightarrow} A$ be a central cycle-type LCS extension, and $s \colon A \to E$ be a linear section of~$\pi$. Then the map
\begin{linenomath*}\begin{align*}
\widetilde{f} \colon A \times A &\to E,\\
(a,a') &\mapsto s(a) \cdot s(a') - s(a \cdot a')
\end{align*}\end{linenomath*}
takes values in $\iota(\Gamma)$ and defines a reduced cocycle $f \in \RedZ^2(A;\Gamma)$. Extensions~$E$ and $\Gamma \oplus_f A$ are equivalent. 
Furthermore, a cocycle~$f'$ obtained from another section~$s'$ of~$\pi$ is cohomologous to~$f$.
\end{lem}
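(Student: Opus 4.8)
The plan is to verify the four assertions of Lemma~\ref{L:section->cocycle} in turn, relying on the centrality of $\iota(\Gamma)$ in~$E$ and on the fact that $s$ is a linear section.

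\textbf{Step 1: $\widetilde{f}$ lands in $\iota(\Gamma)$.} First I would apply $\pi$ to $\widetilde{f}(a,a') = s(a)\cdot s(a') - s(a\cdot a')$. Since $\pi$ is a LCS morphism and $\pi\circ s = \id_A$, one computes $\pi(s(a)\cdot s(a')) = \pi s(a)\cdot \pi s(a') = a\cdot a'$ and $\pi(s(a\cdot a')) = a\cdot a'$; because $\pi$ is additive, $\pi(\widetilde{f}(a,a')) = 0$. By exactness of~\eqref{E:ShortExactSq}, $\widetilde{f}(a,a') \in \Ker\pi = \Im\iota$, so $\widetilde{f} = \iota\circ f$ for a unique $f\colon A\times A \to \Gamma$ (using injectivity of~$\iota$).

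\textbf{Step 2: $f$ is a reduced $2$-cocycle.} I would check the two defining relations~\eqref{E:2Cocycle} and~\eqref{E:2Cocycle2}. For~\eqref{E:2Cocycle}, linearity of~$s$ gives $s(a\cdot(b+c)) = s(a\cdot b + a\cdot c) = s(a\cdot b)+s(a\cdot c)$ (using~\eqref{E:LinCyclic} in~$A$), while $s(a)\cdot s(b+c) = s(a)\cdot(s(b)+s(c)) = s(a)\cdot s(b) + s(a)\cdot s(c)$ by~\eqref{E:LinCyclic} in~$E$; subtracting yields additivity of~$\widetilde f$, hence of~$f$, in the second slot. For~\eqref{E:2Cocycle2} I would expand both sides of the identity in~$E$ coming from~\eqref{E:LinCyclic2}, namely $s(a+b)\cdot s(c) = (s(a)\cdot s(b))\cdot(s(a)\cdot s(c))$, and substitute the definition of~$\widetilde f$ repeatedly. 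The centrality of $\iota(\Gamma)$ is the crucial ingredient here: elements of $\iota(\Gamma)$ act trivially under~$\cdot$ on both sides, so terms of the form $\widetilde f(\cdot,\cdot)\cdot x$ collapse to~$x$, which is precisely what converts the tangle of nested products into the clean cocycle relation. This centrality bookkeeping is the step I expect to be the main obstacle.

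\textbf{Step 3: $E \cong \Gamma\oplus_f A$.} I would define $\varphi\colon \Gamma\oplus_f A \to E$ by $\varphi(\gamma,a) = \iota(\gamma) + s(a)$. Since the underlying additive sequence splits via~$s$, $\varphi$ is an isomorphism of abelian groups. To see it respects~$\cdot$, I would compute $\varphi((\gamma,a)\cdot(\gamma',a')) = \varphi(\gamma'+f(a,a'),\,a\cdot a') = \iota(\gamma') + \iota f(a,a') + s(a\cdot a') = \iota(\gamma') + s(a)\cdot s(a')$ using Step~1; comparing with $\varphi(\gamma,a)\cdot\varphi(\gamma',a') = (\iota\gamma + s(a))\cdot(\iota\gamma' + s(a'))$ and again invoking the centrality and linearity of~$\cdot$ to absorb the $\iota\gamma,\iota\gamma'$ factors, the two agree. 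Compatibility with~$\iota$ and~$\pi$ is immediate, so $\varphi$ realizes the equivalence of Definition~\ref{D:ExtEquiv}.

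\textbf{Step 4: section-independence.} Given a second linear section~$s'$, I would set $\theta\colon A\to\Gamma$ to be the unique map with $\iota\theta(a) = s'(a) - s(a)$ (well defined since $\pi(s'(a)-s(a)) = a-a = 0$), and note $\theta$ is linear because $s,s'$ are. A direct computation of $f'(a,a') - f(a,a')$, expanding $s'(a)\cdot s'(a') - s'(a\cdot a')$ and using the centrality to discard the $\iota\theta$ contributions on the left factor of each product, should yield exactly $\theta(a\cdot a') - \theta(a')$, i.e. $\partial^1(\theta)(a,a')$. Hence $f'$ and~$f$ differ by a reduced $2$-coboundary and are cohomologous, completing the proof.
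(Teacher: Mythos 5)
Your proposal is correct and follows essentially the same route as the paper's proof: the same $f=\iota^{-1}\widetilde{f}$ via exactness, the same centrality collapses $(\gamma+x)\cdot y=x\cdot y$ and $x\cdot\gamma=\gamma$ to turn the expansion of $s(a+b)\cdot s(c)=(s(a)\cdot s(b))\cdot(s(a)\cdot s(c))$ into relation~\eqref{E:2Cocycle2} (your Step~2 is only sketched, but the identity you expand and the collapse mechanism are exactly those in the paper's chain of equalities), and the same equivalence $\varphi(\gamma,a)=\iota(\gamma)+s(a)$. The one slip is a sign in Step~4: with your convention $\iota\theta=s'-s$, the computation yields $f'-f=\theta(a')-\theta(a\cdot a')=-\partial^1\theta$, not $\partial^1\theta$; this is harmless, since $-\partial^1\theta=\partial^1(-\theta)$ is still a reduced $2$-coboundary, and the paper sidesteps it by setting $\widetilde{\theta}=s-s'$.
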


\begin{proof}
The computation
$$\pi(\widetilde{f}(a,a')) = \pi s(a) \cdot \pi s(a') - \pi s(a \cdot a') = a \cdot a' - a \cdot a' = 0$$
yields $\Im \widetilde{f} \subseteq \Ker \pi = \Im \iota$ (by the definition of a short exact sequence).  Hence the map $f \colon A \times A \to \Gamma$ can be defined by the formula $f = \iota^{-1}\widetilde{f}$. It remains to check relations~\eqref{E:2Cocycle}-\eqref{E:2Cocycle2} for this map. The linearity of~$s$ and of the left translations $t_b \colon a \mapsto b \cdot a$ gives
\begin{linenomath*}\begin{align*}
	\widetilde{f}(a,b+c) &= s(a) \cdot s(b+c) - s(a \cdot (b+c)) = s(a) \cdot (s(b)+s(c)) - s(a \cdot b + a \cdot c)\\
	&= s(a) \cdot s(b) + s(a) \cdot s(c) - s(a \cdot b) - s(a \cdot c) =	\widetilde{f}(a,b) + \widetilde{f}(a,c).
\end{align*}\end{linenomath*}
hence $f(a,b+c) = f(a,b) + f(a,c)$, by the linearity of~$\iota$. Similarly, one has
\begin{linenomath*}\begin{align*}
	\widetilde{f}(a+b,c) &= s(a+b) \cdot s(c) - s((a+b) \cdot c) \\
	&= (s(a)+s(b)) \cdot s(c) - s((a \cdot b) \cdot (a \cdot c)) \\
	&= (s(a)\cdot s(b)) \cdot (s(a)\cdot s(c)) + \widetilde{f}(a \cdot b,a \cdot c) - s(a \cdot b) \cdot s(a \cdot c)\\
	&= \widetilde{f}(a \cdot b,a \cdot c) + (\widetilde{f}(a,b) + s(a \cdot b)) \cdot (s(a)\cdot s(c)) - s(a \cdot b) \cdot s(a \cdot c)\\
	&\overset{(1)}{=} \widetilde{f}(a \cdot b,a \cdot c) + s(a \cdot b) \cdot (s(a)\cdot s(c)) - s(a \cdot b) \cdot s(a \cdot c)\\
	&= \widetilde{f}(a \cdot b,a \cdot c) + s(a \cdot b) \cdot (s(a)\cdot s(c)- s(a \cdot c))\\
	&= \widetilde{f}(a \cdot b,a \cdot c) + s(a \cdot b) \cdot \widetilde{f}(a,c)\\	
	&\overset{(2)}{=} \widetilde{f}(a \cdot b,a \cdot c) + \widetilde{f}(a,c).
\end{align*}\end{linenomath*}
In~$(1)$ we got rid of $\widetilde{f}(a,b) \in \iota(\Gamma)$ since the centrality of $\iota(\Gamma)$ yields
$$(\widetilde{f}(a,b) + x) \cdot y = (\widetilde{f}(a,b) \cdot x) \cdot (\widetilde{f}(a,b) \cdot y) = x \cdot y$$
for all $x,y \in E$. This centrality was also used in~$(2)$. Relation $f(a+b,c) = f(a \cdot b,a \cdot c) + f(a,c)$ is now obtained from the corresponding relation for~$\widetilde{f}$ by applying $\iota^{-1}$.

We will next show that the linear map $\varphi \colon \Gamma \oplus_f A \to E$, $\gamma \oplus a \mapsto \iota(\gamma) + s(a)$ yields an equivalence of extensions. It is bijective, the inverse given by the map $x \mapsto \iota^{-1}(x-s\pi (x)) \oplus \pi(x)$ (this map is well defined since $x-s\pi (x) \in \Ker \pi = \Im \iota$). Let us check that $\varphi$ entwines the cycle set operations. One has
\begin{linenomath*}\begin{align*}
\varphi((\gamma \oplus a) &\cdot (\gamma' \oplus a')) = \varphi((\gamma' + f(a,a')) \oplus a \cdot a') = \iota(\gamma' + f(a,a')) + s(a \cdot a') \\
&= \iota(\gamma') + \widetilde{f}(a,a') + (s(a) \cdot s(a') - \widetilde{f}(a,a')) = \iota(\gamma') + s(a) \cdot s(a')\\
&= s(a) \cdot \iota(\gamma') + s(a) \cdot s(a') = s(a) \cdot (\iota(\gamma') + s(a')) \\
&= (\iota(\gamma) +s(a)) \cdot (\iota(\gamma') + s(a')) = \varphi(\gamma \oplus a) \cdot \varphi(\gamma' \oplus a').
\end{align*}\end{linenomath*} 
We used the centrality of~$\iota(\gamma')$ and~$\iota(\gamma)$. The commutativity of the diagram~\eqref{E:ExtEquiv} is obvious, and completes the proof.

Suppose now that the reduced cocycles~$f$ and~$f'$ are obtained from the sections~$s$ and~$s'$ respectively. Put $\widetilde{\theta} = s - s' \colon A \to E$. This is a linear map with its image contained in $\Ker \pi = \Im \iota$. Hence it defines a linear map $\theta \colon A \to \Gamma$. To show that~$f$ and~$f'$ are cohomologous, we establish the property $f' - f = \partial^1 \theta$. One computes
\begin{linenomath*}\begin{align*}
	(\widetilde{f} - \widetilde{f}')(a,a') &= \widetilde{f}(a,a') - \widetilde{f}'(a,a')\\
	&= s(a) \cdot s(a')-s(a \cdot a') - s'(a) \cdot s'(a')+s'(a \cdot a') \\
	&= s(a) \cdot s(a')- s(a) \cdot s'(a') - \widetilde{\theta}(a\cdot a')\\
	&= s(a) \cdot (s(a')- s'(a')) - \widetilde{\theta}(a\cdot a')\\	
	&= s(a) \cdot \widetilde{\theta}(a') - \widetilde{\theta}(a\cdot a') \overset{(1)}{=} \widetilde{\theta}(a') - \widetilde{\theta}(a\cdot a').
\end{align*}\end{linenomath*}
In~$(1)$ we used the centrality of $\widetilde{\theta}(a')$. The desired relation is obtained by applying $\iota^{-1}$.
\end{proof}

We now compare extensions constructed out of different $2$-cocycles.

\begin{lem}\label{L:ExtEquivVsCohom}
Let $(A, \cdot, +)$ be a linear cycle set, $\Gamma$ be an abelian group, and $f, f' \in \RedZ^2(A;\Gamma)$ be two reduced $2$-cocycles. The linear cycle set extensions $\Gamma \oplus_f A$ and $\Gamma \oplus_{f'} A$ are equivalent if and only if $f$ and $f'$ are cohomologous.
\end{lem}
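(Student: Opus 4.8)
The plan is to show that equivalences between $\Gamma \oplus_f A$ and $\Gamma \oplus_{f'} A$ are in bijection with linear maps $\theta \colon A \to \Gamma$ satisfying $f' - f = \partial^1\theta$, so that such an equivalence exists precisely when $f$ and $f'$ are cohomologous. The candidate isomorphism attached to such a $\theta$ is the additive map
\begin{linenomath*}\begin{align*}
\varphi_\theta \colon \Gamma \oplus_f A \to \Gamma \oplus_{f'} A, \qquad \gamma \oplus a \mapsto (\gamma + \theta(a)) \oplus a.
\end{align*}\end{linenomath*}

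For the ``if'' direction, I would take $\theta$ linear with $f' - f = \partial^1\theta$ and verify that $\varphi_\theta$ is an equivalence of extensions. Additivity and bijectivity are immediate (the inverse is $\varphi_{-\theta}$, which makes sense since $f - f' = \partial^1(-\theta)$), and $\theta(0) = 0$ together with the preservation of the $A$-component show that $\varphi_\theta$ commutes with the inclusions and projections of the two extensions. The only substantial point is that $\varphi_\theta$ respects the cycle set operation: comparing
\begin{linenomath*}\begin{align*}
\varphi_\theta((\gamma \oplus a) \cdot (\gamma' \oplus a')) &= (\gamma' + f(a,a') + \theta(a \cdot a')) \oplus (a \cdot a')
\end{align*}\end{linenomath*}
with $\varphi_\theta(\gamma \oplus a) \cdot \varphi_\theta(\gamma' \oplus a') = (\gamma' + \theta(a') + f'(a,a')) \oplus (a \cdot a')$ reduces the morphism condition to the single identity $f'(a,a') - f(a,a') = \theta(a \cdot a') - \theta(a') = \partial^1\theta(a,a')$, which holds by hypothesis.

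For the ``only if'' direction, I would start from an arbitrary equivalence $\varphi \colon \Gamma \oplus_f A \to \Gamma \oplus_{f'} A$ and recover a suitable $\theta$. Compatibility with the projections forces $\varphi$ to preserve the $A$-component, and compatibility with the inclusions gives $\varphi(\gamma \oplus 0) = \gamma \oplus 0$. Writing $\theta(a)$ for the $\Gamma$-component of $\varphi(0 \oplus a)$ and invoking additivity of $\varphi$ then yields $\varphi(\gamma \oplus a) = (\gamma + \theta(a)) \oplus a$, that is $\varphi = \varphi_\theta$, while additivity also forces $\theta$ to be linear. Running the computation of the previous paragraph backwards, the fact that $\varphi$ is a cycle set morphism becomes equivalent to $f' - f = \partial^1\theta$, so $f$ and $f'$ are cohomologous.

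The computation entwining $\varphi_\theta$ with the operation~$\cdot$ is shared by both directions, so the only genuinely new step is the normal-form argument that every equivalence equals some $\varphi_\theta$; I expect this to be the main (though still routine) obstacle, since it is exactly where one must use all three extension-compatibility constraints---preservation of the projection, preservation of the inclusion, and additivity---simultaneously.
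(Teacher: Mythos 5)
Your proposal is correct and follows essentially the same route as the paper: both reduce everything to the normal form $\varphi(\gamma \oplus a) = (\gamma + \theta(a)) \oplus a$ forced by the extension-equivalence constraints, and then observe that the cycle set morphism condition for such a $\varphi$ is precisely $f' - f = \partial^1\theta$. Your spelled-out derivation of the normal form (projection compatibility, inclusion compatibility, plus additivity) is just a more detailed version of what the paper compresses into ``the commutativity of the diagram forces it to be of this form.''
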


\begin{proof}
Suppose that a linear map $\varphi \colon \Gamma \oplus_f A \to \Gamma \oplus_{f'} A$ provides an equivalence of extensions. The commutativity of the diagram~\eqref{E:ExtEquiv} forces it to be of the form $\varphi (\gamma \oplus a) = (\gamma + \theta(a)) \oplus a$ for some linear map $\theta \colon A \to \Gamma$. Further, one computes
\begin{linenomath*}\begin{align*}
\varphi((\gamma \oplus a) \cdot (\gamma' \oplus a')) &= \varphi((\gamma' + f(a,a')) \oplus a \cdot a') \\
&= (\gamma' + f(a,a') + \theta(a \cdot a')) \oplus a \cdot a',\\
\varphi(\gamma \oplus a) \cdot \varphi(\gamma' \oplus a') &= ((\gamma + \theta(a)) \oplus a) \cdot ((\gamma' + \theta(a')) \oplus a') \\
&= (\gamma' + \theta(a') + f'(a,a')) \oplus a \cdot a'.
\end{align*}\end{linenomath*}
Thus the map~$\varphi$ entwines the cycle set operations if and only if $f'-f$ is the coboundary $\partial^1 \theta$.

In the opposite direction, take cohomologous cocycles $f$ and $f'$. This means that the relation $f'-f =\partial^1 \theta$ holds for a linear map $\theta \colon A \to \Gamma$. Repeating the arguments above, one verifies that the map $\varphi (\gamma \oplus a) = (\gamma + \theta(a)) \oplus a$ is an equivalence of extensions $\Gamma \oplus_f A \to \Gamma \oplus_{f'} A$.
\end{proof}

Put together, the preceding lemmas yield:

\begin{thm}\label{T:Ext=H2}
Let $(A, \cdot, +)$ be a linear cycle set and $\Gamma$ be an abelian group. The construction from Lemma~\ref{L:section->cocycle} yields a bijective correspondence
$$\CTExt(A,\Gamma) \overset{1:1}{\longleftrightarrow} \RedH^2(A; \Gamma).$$
\end{thm}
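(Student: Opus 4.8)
The plan is to assemble the three preceding lemmas into a single bijection. I would define a map $\Phi \colon \RedH^2(A;\Gamma) \to \CTExt(A,\Gamma)$ by sending the cohomology class of a reduced $2$-cocycle~$f$ to the equivalence class of the extension $\Gamma \oplus_f A$ furnished by Lemma~\ref{L:ExtFromCocycle}. The first point to settle is that $\Phi$ is well defined on cohomology classes: if $f$ and $f'$ are cohomologous, then the ``if'' direction of Lemma~\ref{L:ExtEquivVsCohom} provides an equivalence $\Gamma \oplus_f A \simeq \Gamma \oplus_{f'} A$, so $f$ and $f'$ determine the same class in $\CTExt(A,\Gamma)$.

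For surjectivity I would start from an arbitrary central cycle-type extension $\Gamma \overset{\iota}{\rightarrowtail} E \overset{\pi}{\twoheadrightarrow} A$. Here the hypothesis that the extension be \emph{cycle-type} is essential: by definition the underlying short exact sequence of abelian groups splits, so a linear section $s \colon A \to E$ of~$\pi$ exists. Feeding~$E$ and~$s$ into Lemma~\ref{L:section->cocycle} produces a reduced $2$-cocycle $f \in \RedZ^2(A;\Gamma)$ together with an equivalence $E \simeq \Gamma \oplus_f A$. Hence $\Phi([f]) = [E]$, and every class in $\CTExt(A,\Gamma)$ lies in the image of~$\Phi$.

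For injectivity, suppose $\Phi([f]) = \Phi([f'])$, that is, $\Gamma \oplus_f A$ and $\Gamma \oplus_{f'} A$ are equivalent as extensions. The ``only if'' direction of Lemma~\ref{L:ExtEquivVsCohom} then forces~$f$ and~$f'$ to be cohomologous, so $[f] = [f']$ in $\RedH^2(A;\Gamma)$. Combined with well-definedness and surjectivity, this shows $\Phi$ is a bijection, which is precisely the claimed $1{:}1$ correspondence.

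I do not anticipate a genuine obstacle, since all the substantive work has already been isolated in the three lemmas; the only care needed is bookkeeping. The one hypothesis that must be explicitly exploited is the linear splitting, used above to produce the section~$s$ in the surjectivity step. Note also that the ``furthermore'' clause of Lemma~\ref{L:section->cocycle}, asserting independence of the cocycle on the choice of section, is not strictly required for this assembly, as it is subsumed by the well-definedness argument; however, should one prefer to exhibit the inverse map $[E] \mapsto [f]$ directly rather than deduce bijectivity from surjectivity and injectivity, that clause, together with the observation that an equivalence $\varphi \colon E \to E'$ carries a section~$s$ to a section $\varphi \circ s$ inducing the same cocycle, is exactly what guarantees the inverse is well defined.
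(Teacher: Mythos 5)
Your proposal is correct and follows exactly the paper's route: the paper gives no separate proof of Theorem~\ref{T:Ext=H2}, stating only that the preceding lemmas (\ref{L:ExtFromCocycle}, \ref{L:section->cocycle}, \ref{L:ExtEquivVsCohom}) ``put together'' yield the result, which is precisely the assembly you carry out. Your bookkeeping---well-definedness and injectivity from the two directions of Lemma~\ref{L:ExtEquivVsCohom}, surjectivity from Lemma~\ref{L:section->cocycle} via the linear section guaranteed by the cycle-type hypothesis---is sound, as is your side remark about when the ``furthermore'' clause is actually needed.
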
 

We finish this section by observing that in degree~$2$, the normalization brings nothing new to the reduced LCS cohomology theory:

\begin{pro}\label{PR:NormInDeg2}
All reduced $2$-cocycles of a linear cycle set $(A, \cdot, +)$ are normalized. Moreover, one has an isomorphism in cohomology: 
$$\RedH^2(A;\Gamma) \cong \RedH_{\Norm}^2(A;\Gamma).$$
\end{pro}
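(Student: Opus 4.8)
The plan is to prove the two assertions separately, starting with the claim that every reduced $2$-cocycle is automatically normalized. Recall that a reduced $2$-cochain $f \colon A \times A \to \Gamma$ is normalized precisely when it vanishes on the degenerate pairs, i.e. $f(0,b) = 0$ and $f(a,0) = 0$ for all $a,b \in A$. The second of these is immediate: by the linearity~\eqref{E:2Cocycle} in the last coordinate, $f(a,0) = f(a, 0+0) = f(a,0) + f(a,0)$, forcing $f(a,0) = 0$. For the first, I would exploit the cocycle relation~\eqref{E:2Cocycle2} together with Lemma~\ref{L:LinCycleSet0}, which gives $0 \cdot a = a$. Substituting $a = b = 0$ into~\eqref{E:2Cocycle2} yields $f(0+0,c) = f(0 \cdot 0, 0 \cdot c) + f(0,c)$, i.e. $f(0,c) = f(0,c) + f(0,c)$ using $0 \cdot 0 = 0$ and $0 \cdot c = c$; hence $f(0,c) = 0$. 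This disposes of the first statement using only the defining relations of a reduced $2$-cocycle and the special values of the zero element.

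For the cohomology isomorphism, I would first observe that the normalized reduced cocycles $\RedZ_{\Norm}^2(A;\Gamma)$ form a subgroup of $\RedZ^2(A;\Gamma)$, but the first part shows these two groups actually \emph{coincide}, since every reduced $2$-cocycle is already normalized. Thus $\RedZ^2 = \RedZ_{\Norm}^2$. It then remains to compare the coboundaries. The inclusion $\RedB_{\Norm}^2(A;\Gamma) \subseteq \RedB^2(A;\Gamma)$ is clear, so the crux is to show that every reduced $2$-coboundary $\partial^1(\theta)(a,b) = \theta(a \cdot b) - \theta(b)$, with $\theta$ linear, is automatically normalized. Linearity of $\theta$ gives $\theta(0) = 0$, whence $\partial^1(\theta)(a,0) = \theta(a \cdot 0) - \theta(0) = \theta(0) - \theta(0) = 0$ using $a \cdot 0 = 0$ from Lemma~\ref{L:LinCycleSet0}; and $\partial^1(\theta)(0,b) = \theta(0 \cdot b) - \theta(b) = \theta(b) - \theta(b) = 0$ using $0 \cdot b = b$. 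Hence $\RedB^2 = \RedB_{\Norm}^2$ as well.

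Combining the two equalities $\RedZ^2 = \RedZ_{\Norm}^2$ and $\RedB^2 = \RedB_{\Norm}^2$ yields the desired isomorphism $\RedH^2 = \RedZ^2/\RedB^2 = \RedZ_{\Norm}^2/\RedB_{\Norm}^2 = \RedH_{\Norm}^2$ directly, indeed as an equality of subquotients of the same cochain group rather than merely an abstract isomorphism. I do not anticipate a serious obstacle here: the entire argument reduces to evaluating cocycles and coboundaries at the zero element and invoking Lemma~\ref{L:LinCycleSet0}. The only point requiring mild care is to verify that the degeneracy conditions in degree~$2$ are exhausted by the pairs $(0,b)$ and $(a,0)$, so that vanishing on these is genuinely equivalent to normalization; this is immediate from the definition of degenerate $n$-tuples given before Proposition~\ref{PR:LinCycleHom}.
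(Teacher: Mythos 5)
Your proposal is correct and follows essentially the same route as the paper: the substitution $a=b=0$ in~\eqref{E:2Cocycle2} combined with Lemma~\ref{L:LinCycleSet0} and linearity in the last coordinate gives that every reduced $2$-cocycle is normalized, exactly as in the paper's proof. Your explicit check that $\theta(0)=0$ for linear~$\theta$ (so that $\RedB^2(A;\Gamma)=\RedB_{\Norm}^2(A;\Gamma)$) is just an unpacking of the paper's remark that the normalized and ordinary complexes coincide in degree~$1$, so the two arguments agree in substance.
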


\begin{proof}
Putting $a=b=0$ in the defining relation~\eqref{E:2Cocycle2} for a reduced $2$-cocycle~$f$, and using the properties of the element~$0$ from Lemma~\ref{L:LinCycleSet0}, one gets $f(0,c)=0$ for all $c \in A$. Moreover, $f(c,0)=0$ by linearity. So~$f$ is normalized, hence the identification $\RedZ^2(A;\Gamma) = \RedZ_{\Norm}^2(A;\Gamma)$. In degree~$1$ the normalized and usual complexes coincide, yielding the desired cohomology group isomorphism in degree~$2$.
\end{proof}

\section{Full linear cycle set cohomology}\label{S:HomFull}

The previous section treated linear cycle set extensions of the form $\Gamma \oplus_f A$. They can be thought of as the direct product $\Gamma \oplus A$ of LCS with the cycle set operation~$\cdot$ deformed by~$f$. From now on we will handle a more general situation: the additive operation~$+$ on $\Gamma \oplus A$ will be deformed as well. Most proofs in this general case are analogous to but more technical than those from the previous sections.  

Take a linear cycle set $(A, \cdot, +)$ and an abelian group $\Gamma$. For $i \geqslant 0, j \geqslant 1$, let $ShC_{i,j}(A;\Gamma)$ be the abelian subgroup of $\Gamma^{(A^{\times (i+j)})}$, generated by the \emph{partial shuffles} 
\begin{linenomath*}\begin{align}\label{E:PartialShuffle}
&\sum_{\sigma \in Sh_{r,j-r}} (-1)^{\sigma} \gamma (a_1,\ldots,a_i,a_{i+\sigma^{-1}(1)},\ldots,a_{i+\sigma^{-1}(j)})
\end{align}\end{linenomath*}
taken for all $1 \leqslant r \leqslant j-1$, $a_k \in A$, $\gamma \in \Gamma$. Here $Sh_{r,j-r}$ is the subset of all the permutations~$\sigma$ of $j$~elements satisfying $\sigma(1) \leqslant \cdots \leqslant \sigma(r)$, $\sigma(r+1) \leqslant \cdots \leqslant \sigma(j)$. The term \emph{shuffle} is used when $i=0$. Put $C_{i,j}(A;\Gamma) = \Gamma^{(A^{\times (i+j)})} / ShC_{i,j}(A;\Gamma)$.

Recall the notations
\begin{linenomath*}\begin{align}
\partial_{n;0}(a_1,\ldots,a_{n}) &= (a_1 \cdot a_2,\ldots,a_1 \cdot a_{n}),\label{E:Bisimpl}\\
\partial_{n;i}(a_1,\ldots,a_{n}) &=  (a_1,\ldots,a_i+a_{i+1},\dots,a_{n}), \qquad 1 \leqslant i \leqslant n-1,\label{E:Bisimpl2}
\end{align}\end{linenomath*}
from the proof of Proposition~\ref{PR:LinCycleHom}, and consider the coordinate omitting maps
\begin{linenomath*}\begin{align}
\partial'_{n;i}(a_1,\ldots,a_{n}) &= (a_1,\ldots,a_{i-1},a_{i+1},\ldots,a_n), \qquad 1 \leqslant i \leqslant n.\label{E:Bisimpl3}
\end{align}\end{linenomath*}
Combine (the linearizations of) these maps into what we will show to be horizontal and vertical differentials of a bicomplex:
\begin{linenomath*}\begin{align}
\partial^h_{i,j} &= \partial_{i+j;0} +\sum\nolimits_{k=1}^{i-1}(-1)^{k}\partial_{i+j;k} + (-1)^{i}\partial'_{i+j;i}, \qquad i \geqslant 1, j \geqslant 1;\label{E:dh}\\
-\partial^v_{i,j} &= \partial'_{i+j;i+1} +\sum\nolimits_{k=1}^{j-1}(-1)^{k}\partial_{i+j;i+k} + (-1)^{j}\partial'_{i+j;i+j}, \qquad i \geqslant 0, j \geqslant 2.\label{E:dv}
\end{align}\end{linenomath*}
Here the empty sums are zero by convention. 
As before, $C^{\Degen}_{i,j}(A;\Gamma)$ denotes the abelian subgroup of $\Gamma^{(A^{\times (i+j)})}$ generated by the degenerate $(i+j)$-tuples, and $C^{\Norm}_{i,j}(A;\Gamma)$ is the quotient $\Gamma^{(A^{\times (i+j)})} /(C^{\Degen}_{i,j}(A;\Gamma) + ShC_{i,j}(A;\Gamma))$.

Dually, for $f \in  \Fun(A^{\times (i+j)},\Gamma)$, put $\partial_h^{i,j}f = f \circ \partial^h_{i+1,j}$ and $\partial_v^{i,j}f = f \circ \partial^v_{i,j+1}$, where $i \geqslant 0, j \geqslant 1$, and $f$~is extended to $\Z^{(A^{\times (i+j)})}$ by linearity. Let $C^{i,j}(A;\Gamma)$ be the abelian group of the maps $A^{\times (i+j)} \to \Gamma$ whose linearization vanishes on all the partial shuffles~\eqref{E:PartialShuffle} (with $\gamma$ omitted), and let $C_{\Norm}^{i,j}(A;\Gamma) \subseteq C^{i,j}(A;\Gamma)$ comprise the maps which are moreover zero on all the degenerate $(i+j)$-tuples.

We now assemble these data into a chain and a cochain bicomplex structures with normalization.

\begin{thm}\label{T:LinCycleHomFull}
Let $(A, \cdot, +)$ be a linear cycle set and $\Gamma$ be an abelian group.
\begin{enumerate}
\item The abelian groups $\Gamma^{(A^{\times (i+j)})}$, $i \geqslant 0, j \geqslant 1$, together with the linear maps $\partial^h_{i,j}$ and $\partial^v_{i,j}$ above, form a chain bicomplex. In other words, the following relations are satisfied:
\begin{linenomath*}\begin{align}
&\partial^h_{i-1,j}\partial^h_{i,j}=0, \qquad i \geqslant 2, j \geqslant 1;\label{E:BiComplexH}\\
&\partial^v_{i,j-1}\partial^v_{i,j}=0, \qquad i \geqslant 0, j \geqslant 3;\label{E:BiComplexV}\\
&\partial^h_{i,j-1}\partial^v_{i,j} = \partial^v_{i-1,j}\partial^h_{i,j}, \qquad i \geqslant 1, j \geqslant 2.\label{E:BiComplexHV}
\end{align}\end{linenomath*}
Moreover, these maps restrict to the subgroups $ShC_{i,j}(A;\Gamma)$ and $C^{\Degen}_{i,j}(A;\Gamma)$, and thus induce chain bicomplex structures on $C_{i,j}(A;\Gamma)$ and on $C^{\Norm}_{i,j}(A;\Gamma)$.
\item The linear maps $\partial_h^{i,j}$ and $\partial_v^{i,j}$ yield a cochain bicomplex structure for the abelian groups $\Fun(A^{\times (i+j)},\Gamma)$, $i \geqslant 0, j \geqslant 1$. This structure restricts to $C^{i,j}(A;\Gamma)$ and further to $C_{\Norm}^{i,j}(A;\Gamma)$.
\end{enumerate}
\end{thm}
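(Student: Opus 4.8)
The plan is to recognize the two families $\{\partial^h_{i,j}\}$ and $\{\partial^v_{i,j}\}$ as alternating sums of the face maps of a bisimplicial abelian group, so that the three bicomplex relations \eqref{E:BiComplexH}--\eqref{E:BiComplexHV} become formal consequences of simplicial and cross-commutation identities among those faces. Concretely, for the horizontal direction I would set $d^h_0 = \partial_{i+j;0}$, $d^h_k = \partial_{i+j;k}$ for $1 \leqslant k \leqslant i-1$, and $d^h_i = \partial'_{i+j;i}$, so that $\partial^h_{i,j} = \sum_{k=0}^{i}(-1)^k d^h_k$; for the vertical direction I would set $d^v_0 = \partial'_{i+j;i+1}$, $d^v_l = \partial_{i+j;i+l}$ for $1 \leqslant l \leqslant j-1$, and $d^v_j = \partial'_{i+j;i+j}$, so that $-\partial^v_{i,j} = \sum_{l=0}^{j}(-1)^l d^v_l$. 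The global sign on the vertical differential is a harmless convention: it affects neither the vanishing $\partial^v\partial^v=0$ nor the commutation relation \eqref{E:BiComplexHV}, both sides of the latter picking up the same factor, and it is chosen merely so that \eqref{E:BiComplexHV} holds in the stated commuting rather than anticommuting form.

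First I would verify the horizontal simplicial identities $d^h_k d^h_{k'} = d^h_{k'-1}d^h_k$ for $k<k'$. These concern the cycle action $d^h_0$, the additions, and the single omission $d^h_i$, all confined to the first block, the last $j$ coordinates being passive spectators (acted on by $a_1\cdot$ only in the $d^h_0$ term). This is exactly the ``almost commutativity'' already established in the proof of Proposition~\ref{PR:LinCycleHom}: the purely additive cases use associativity of~$+$ and the standard omission identities, the cases pairing $d^h_0$ with an addition use left distributivity~\eqref{E:LinCyclic}, and the case pairing $d^h_0$ with the first addition uses~\eqref{E:LinCyclic2}. The vertical identities $d^v_l d^v_{l'} = d^v_{l'-1}d^v_l$ are easier still, involving no cycle action, and reduce entirely to the associativity and commutativity of~$+$ together with the classical simplicial identities for omissions and adjacent additions. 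Finally I would check that each $d^h_k$ commutes with each $d^v_l$: operations living in disjoint blocks commute after the routine re-indexing forced by the omissions at the block boundary, and the only genuinely nontrivial case is $d^h_0$ against a vertical addition, where left distributivity~\eqref{E:LinCyclic} again does the job. With these three families in hand, \eqref{E:BiComplexH}--\eqref{E:BiComplexHV} follow by the standard alternating-sum argument for bisimplicial abelian groups.

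Next I would show that both differentials restrict to $ShC_{i,j}$ and to $C^{\Degen}_{i,j}$. For the degenerate subgroup the mechanism is the usual normalization argument powered by Lemma~\ref{L:LinCycleSet0}: if a tuple has $a_m=0$, the two face maps that collapse this entry (the two additions adjacent to position~$m$; or an adjacent addition and a boundary omission; or, when $m=1$, the cycle action via $0\cdot a=a$ together with the first addition) produce identical tuples with opposite signs and cancel, while every surviving term keeps a zero entry since $a\cdot 0=0$. For the shuffle subgroup I would treat the two directions separately. Restricted to the last $j$ coordinates, $\partial^v$ is precisely the reduced bar-type differential of the abelian group $(A,+)$, and the assertion that it sends partial $j$-shuffles to partial $(j-1)$-shuffles is the classical fact that shuffles span a subcomplex of the bar complex of a commutative structure. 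The horizontal differential leaves the shuffled block essentially intact: the additions and the omission $d^h_i$ only alter the context coordinates $a_1,\ldots,a_i$, whereas $d^h_0$ applies $a_1\cdot(-)$ to every shuffled coordinate, and since this map is additive by~\eqref{E:LinCyclic} and bijective, applying it coordinatewise carries a partial shuffle to a partial shuffle. Hence $\partial^h,\partial^v$ descend to $C_{i,j}$ and, with the degeneracy statement, to $C^{\Norm}_{i,j}$.

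The cochain statement (part~2) I would obtain by dualization, exactly as the cohomological half of Proposition~\ref{PR:LinCycleHom} followed from the homological half: the maps $\partial_h^{i,j}=f\circ\partial^h_{i+1,j}$ and $\partial_v^{i,j}=f\circ\partial^v_{i,j+1}$ inherit all relations by transposition, the condition that the linearization vanish on partial shuffles (defining $C^{i,j}$) is dual to quotienting by $ShC_{i,j}$, and vanishing on degenerate tuples (defining $C_{\Norm}^{i,j}$) is dual to the degenerate subgroup. The hard part will be the shuffle-preservation step for $\partial^v$: although classical in spirit, making the bookkeeping of partial shuffles and the block re-indexing fully precise, and confirming that the horizontal cycle action genuinely respects this structure, is the one place where the argument goes beyond what Proposition~\ref{PR:LinCycleHom} already provides.
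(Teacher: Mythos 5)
Your proposal reproduces the paper's architecture almost exactly: the face-map decomposition of $\partial^h$ and $\partial^v$, the observation that the horizontal simplicial identities are those already verified in Proposition~\ref{PR:LinCycleHom} (distributivity~\eqref{E:LinCyclic} and the axiom~\eqref{E:LinCyclic2} for the cases involving $\partial_{i+j;0}$), the identification of each column with the bar/Hochschild complex of $(A,+)$ with inert context coordinates, the commutation of $\partial_{i+j;0}$ with the vertical faces via additivity of left translations, the preservation of partial shuffles by $\partial^h$ because left translation applied coordinatewise to a shuffle is again a shuffle, normalization via Lemma~\ref{L:LinCycleSet0}, and duality for part~(2) -- all of this matches the paper's proof. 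The one genuine divergence is the step you yourself flag as the hard part: that $\partial^v$ carries the subgroup generated by partial shuffles into the subgroup generated by partial shuffles. You dispose of it by citing the classical Harrison-type fact that shuffles span a subcomplex of the bar complex of a commutative structure; the paper instead proves exactly this by hand. It writes $\partial^v$ applied to a partial shuffle as three sums $S_1$, $S_2$, $S_3$, expresses each as a combination of shuffles $\shuffle_{r-1,j-r}$ and $\shuffle_{r,j-r-1}$ precomposed with face maps (the dichotomies $\sigma^{-1}(1)\in\{1,r+1\}$ and $\sigma^{-1}(j)\in\{r,j\}$, and the pairwise cancellation of mixed addition terms in $S_2$ by commutativity of~$+$), and then checks the edge cases $r=1$ and $r=j-1$, where genuinely non-shuffle terms $\pm\partial'_{j;1}$, $\pm\partial'_{j;j}$ appear in $S_1$ and $S_3$ and cancel only in the total sum. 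Since that computation is essentially the entire technical content of the paper's proof, your write-up is structurally sound but outsources its core: the citation is mathematically legitimate (the needed statement is indeed Harrison's lemma for the bar complex of the abelian group $(A,+)$, with context coordinates carried along passively), but a self-contained proof would have to reproduce the cancellation analysis above, in particular the boundary cases where individual terms fail to be shuffles. Your degeneracy argument is, conversely, more detailed than the paper's one-line appeal to Lemma~\ref{L:LinCycleSet0}, and it is correct, including the cancellation of the two faces that absorb a zero entry.
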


We abusively denote the induced or restricted maps from the theorem by the same symbols $\partial^h_{\bullet}$, $\partial^v_{\bullet}$, etc.

The proof of the theorem relies on the following interpretation of our bicomplex. Its $j$th row is almost the complex from Proposition~\ref{PR:LinCycleHom}, with a slight modification: the last entry in an $n$-tuple, to which the $\partial_{n;i}$ with $i > 0$ did nothing and on which $\partial_{n;0}$ acted by a left translation $a_n \mapsto a_1 \cdot a_n$, is replaced with the $j$-tuple of last elements behaving in the same way. In the $i$th column, the first $i$ entries of~$A^{\times (i+\bullet)}$ are never affected, and on the remaining entries the vertical differentials $\partial^v_{i,\bullet}$ act as the differentials from Proposition~\ref{PR:LinCycleHom} computed for the trivial cycle set operation $a \cdot b = b$. Alternatively, the $i$th column can be seen as the Hochschild complex for $(A,+)$ with coefficients in $A^{\times i}$, on which~$A$ acts trivially on both sides. Modding out $ShC_{i,j}(A;\Gamma)$ means passing from the Hochschild to the Harrison complex in each column.

\begin{proof}
As usual, it suffices to treat only the homological statements.

Due to the observation preceding the proof, the horizontal relation~\eqref{E:BiComplexH} and the vertical relation~\eqref{E:BiComplexV} follow from Proposition~\ref{PR:LinCycleHom}. For the mixed relation~\eqref{E:BiComplexHV}, note that the horizontal and vertical differentials involved affect, respectively, the first~$i$ and the last~$j$ entries of an $(i+j)$-tuple, with the exception of the $\partial_{n;0}$ component of~$\partial^h$. However, this component also commutes with~$\partial^v$ because of the linearity (with respect to~$+$) of the left translation $a_1 \cdot $ involved.

Applying a left translation $a \cdot$ to each entry of a partial shuffle~\eqref{E:PartialShuffle}, one still gets a partial shuffle. Consequently, the horizontal differentials~$\partial^h$ restrict to $ShC_{i,j}(A;\Gamma)$. In order to show that the~$\partial^v$ restrict to $ShC_{i,j}(A;\Gamma)$ as well, it suffices to check that the expression
\begin{multline*}
\sum_{\sigma \in Sh_{r,j-r}} (-1)^{\sigma} (a_{\sigma^{-1}(2)},\ldots,a_{\sigma^{-1}(j)})\\
+\sum_{k=1}^{j-1}(-1)^{k} \sum_{\sigma \in Sh_{r,j-r}} (-1)^{\sigma} (a_{\sigma^{-1}(1)},\ldots,a_{\sigma^{-1}(k)}+a_{\sigma^{-1}(k+1)}, \ldots, a_{\sigma^{-1}(j)})\\
+(-1)^{j} \sum_{\sigma \in Sh_{r,j-r}} (-1)^{\sigma}  (a_{\sigma^{-1}(1)},\ldots,a_{\sigma^{-1}(j-1)})
\end{multline*}
is a linear combination of shuffles for all $j \geqslant 1$, $1 \leqslant r \leqslant j-1$, $a_k \in A$. Denote by $S_1$, $S_2$, and $S_3$ the three sums in the expression above. We also use the classical notation
\begin{linenomath*}\begin{align*}
\shuffle_{r,j-r}(a_1, \ldots, a_j) &= \sum\nolimits_{\sigma \in Sh_{r,j-r}} (-1)^{\sigma} (a_{\sigma^{-1}(1)},\ldots,a_{\sigma^{-1}(j)})
\end{align*}\end{linenomath*}
for shuffles, and the convention $\shuffle_{0,j} = \shuffle_{j,0} = \Id$. Recall also notations \eqref{E:Bisimpl2}-\eqref{E:Bisimpl3}. The sums~$S_i$ then rewrite as
\begin{linenomath*}\begin{align*}
S_1 &= \shuffle_{r-1,j-r} \partial'_{j;1} + (-1)^r \shuffle_{r,j-r-1} \partial'_{j;r+1},\\
S_3 &= (-1)^r\shuffle_{r-1,j-r}\partial'_{j;r} + (-1)^j \shuffle_{r,j-r-1}\partial'_{j;j},\\
S_2 &= \sum\nolimits_{k=1}^{r-1} (-1)^k  \shuffle_{r-1,j-r} \partial_{j;k} + \sum\nolimits_{k=r+1}^{j-1} (-1)^k \shuffle_{r,j-r-1} \partial_{j;k},
\end{align*}\end{linenomath*}
with empty sums declared to be zero. The decomposition for~$S_1$ follows from the analysis of the two possibilities for $\sigma^{-1}(1)$ with $\sigma \in Sh_{r,j-r}$, namely,  $\sigma^{-1}(1) = 1$ and $\sigma^{-1}(1) = r+1$. The decomposition for~$S_3$ corresponds to the dichotomy $\sigma^{-1}(j) = r$ or $\sigma^{-1}(j) = j$. In~$S_3$, the summands with $\sigma^{-1}(k) = u \leqslant r < v = \sigma^{-1}(k+1)$ and $\sigma^{-1}(k) = v, \sigma^{-1}(k+1) = u$ appear with opposite signs and can therefore be discarded. The remaining ones can be divided into two classes: those with $\sigma^{-1}(k) < \sigma^{-1}(k+1) \leqslant r$ and those with $r < \sigma^{-1}(k) < \sigma^{-1}(k+1)$, giving the decomposition above. Our~$S_i$ are thus signed sums of shuffles, with the exception of the cases $r \in \{1,j-1\}$. For $r=1$, the non-shuffle terms $\partial'_{j;1}$ and $-\partial'_{j;1}$ appear in~$S_1$ and~$S_3$ respectively; they annihilate each other in the total sum. The case $r=j-1$ is treated similarly.

The possibility to restrict all the~$\partial^h$ and~$\partial^v$ to $C^{\Degen}_{i,j}(A;\Gamma)$ is taken care of, as usual, by Lemma~\ref{L:LinCycleSet0}. As a consequence, one obtains a chain bicomplex structure on $C^{\Norm}_{i,j}(A;\Gamma)$.
\end{proof}

We are now in a position to define the full (co)homology of a linear cycle set:

\begin{defn}\label{D:CycleSetHomFull}
The \emph{(normalized) cycles} / \emph{boundaries} / \emph{homology groups of a linear cycle set $(A,\cdot,+)$} with coefficients in an abelian group~$\Gamma$ are the cycles / boundaries / homology groups of the  total chain complex 
$$(\, C_{n}(A;\Gamma) = \oplus_{i+j=n} C_{i,j}(A;\Gamma),\, \partial_n |_{C_{i,j}} =\partial_{i,j}^{h}+(-1)^i \partial_{i,j}^{v})$$ 
(respectively, $C^{\Norm}_{n}(A;\Gamma) = (\oplus_{i+j=n} C^{\Norm}_{i,j}(A;\Gamma), \partial_n)$) of the bicomplex above. Dually, the \emph{(normalized) cocycles} / \emph{coboundaries} / \emph{cohomology groups of $(A,\cdot,+)$} are those of the complex $(C^{n}(A;\Gamma) = \oplus_{i+j=n} C^{i,j}(A;\Gamma), \, \partial^n = \partial_{n+1}^*)$ (respectively, $(C_{\Norm}^{n}(A;\Gamma) =\oplus_{i+j=n} C_{\Norm}^{i,j}(A;\Gamma), \partial^n)$). We use the usual notations $Q_n(A;\Gamma)$ etc., where $Q$ is one of the letters $Z$, $B$, or $H$.
\end{defn}

\begin{rem}
In fact our (co)chain bicomplex constructions can be refined into bisimplicial ones.
\end{rem}

\begin{rem}
Instead of considering the total complex of our bicomplex, one could start by, say, computing the homology $H^v_{i,\bullet}$ of each column. The horizontal differentials then induce a chain complex structure on each row $H^v_{\bullet,j}$. Observe that the first row is precisely the complex from Proposition~\ref{PR:LinCycleHom}. Its homology is then the reduced homology of our linear cycle set.
\end{rem}

\section{General linear cycle set  extensions}\label{S:ExtFull}

Our next step is to describe what a $2$-cocycle looks like for the full version of linear cycle set cohomology theory. Such a $2$-cocycle consists of two components $f,g \colon A \times A \to \Gamma$, seen as elements of $C^{1,1}(A;\Gamma) = \Fun(A \times A, \Gamma)$ and $C^{0,2}(A;\Gamma)= \Sym(A \times A, \Gamma)$ respectively. Here $\Sym$ denotes the abelian group of \emph{symmetric} maps, in the sense of \begin{linenomath*}\begin{align}\label{E:2CocycleSym}
	&g(a,b)=g(b,a).
\end{align}\end{linenomath*}
These maps should satisfy three identities, one for each component of 
\[
C^{3}(A;\Gamma) = C^{2,1}(A;\Gamma) \oplus C^{1,2}(A;\Gamma)\oplus C^{0,3}(A;\Gamma).
\]
Explicitly, these identities read
\begin{linenomath*}\begin{align}
	&f(a+b,c) = f(a \cdot b,a \cdot c) + f(a,c),\label{E:2CocycleFull}\\
	&f(a,b+c) - f(a,b) - f(a,c) = g(a \cdot b, a \cdot c) - g(b,c),\label{E:2CocycleFull2}\\
	&g(a,b) + g(a+b,c) = g(b,c) + g(a,b+c).\label{E:2CocycleFull3}
\end{align}\end{linenomath*}
In particular, $f$ is a $2$-cocycle of the cycle set $(A,\cdot)$, and $g$ is a symmetric $2$-cocycle of the group $(A,+)$. 
The reduced cocycles are precisely those with $g=0$. Further, the $2$-coboundaries are couples of maps
\begin{linenomath*}\begin{align}
	f(a,b) &= \theta (a \cdot b) - \theta(b),\label{E:2CoboundFull}\\
	g(a,b) &= \theta (a + b) - \theta(a) - \theta(b)\label{E:2CoboundFull2}
\end{align}\end{linenomath*}
for some $\theta \colon A \to \Gamma$.

We next give some elementary properties of $2$-cocycles and $2$-coboundaries.

\begin{lem}\label{L:2CocycleProperties}
	Let $(f,g)$ be a $2$-cocycle of a linear cycle set $(A, \cdot, +)$ with
	coefficients in an abelian group~$\Gamma$.
	\begin{enumerate}
		\item For all $x \in A$, one has 
			\begin{linenomath*}\begin{align*}
				&f(0,x) = f(x,0) = 0,\\
				&g(0,x) = g(x,0) = g(0,0).
			\end{align*}\end{linenomath*}
		\item The $2$-cocycle $(f,g)$ is normalized if and only if one has $g(0,0) = 0$.
	\end{enumerate}
\end{lem}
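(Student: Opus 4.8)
The plan is to extract both parts directly from the defining cocycle identities~\eqref{E:2CocycleFull}--\eqref{E:2CocycleFull3}, by substituting the zero element and exploiting the special properties $a \cdot 0 = 0$ and $0 \cdot a = a$ from Lemma~\ref{L:LinCycleSet0}. The strategy is entirely computational: each vanishing or constancy statement should fall out of a single well-chosen substitution, so the only real care needed is in tracking which of the three identities governs which claim.

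\emph{Part (1).}
First I would establish the statements about~$f$. Setting $a=b=0$ in~\eqref{E:2CocycleFull} and using $0\cdot c = c$ (Lemma~\ref{L:LinCycleSet0}) gives $f(0,c) = f(0\cdot 0,\, 0\cdot c) + f(0,c) = f(0,c) + f(0,c)$, whence $f(0,c)=0$; this is exactly the argument already used in Proposition~\ref{PR:NormInDeg2}. For $f(x,0)=0$, I would instead invoke the linearity of~$f$ in the second argument, which is built into the condition defining $C^{1,1}$ via~\eqref{E:2CocycleFull2}: indeed $f(x,0)=f(x,0+0)$ splits, but more cleanly one notes that~$f$ restricted to a fixed first slot is additive, so $f(x,0)=0$. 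Next, for the statements about~$g$, I would feed $a=b=0$ into the symmetric-cocycle identity~\eqref{E:2CocycleFull3}: this yields $g(0,0) + g(0,c) = g(0,c) + g(0,0+c)$, i.e.\ $g(0,0)=0$ is \emph{not} forced, but rather $g(0,c)=g(0,c)$ trivially—so I would instead set only $a=0$ in~\eqref{E:2CocycleFull3}, obtaining $g(0,b)+g(b,c) = g(b,c) + g(0,b+c)$, hence $g(0,b+c)=g(0,b)$ for all $b,c$; taking $b=0$ shows $g(0,c)=g(0,0)$ for every~$c$. The relation $g(x,0)=g(0,0)$ then follows from the symmetry~\eqref{E:2CocycleSym} of~$g$.

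\emph{Part (2).}
For the normalization criterion I would unwind the definition: $(f,g)$ is normalized precisely when it lies in $C_{\Norm}^{1,1}\oplus C_{\Norm}^{0,2}$, i.e.\ when both~$f$ and~$g$ vanish on all degenerate pairs (those with a zero entry). Part~(1) already shows $f$ vanishes on every degenerate pair unconditionally, while $g$ takes the constant value $g(0,0)$ on all degenerate pairs. Thus $g$ vanishes on degenerate pairs if and only if $g(0,0)=0$, which is the asserted equivalence.

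The main obstacle, such as it is, is purely bookkeeping: one must be careful that the two components $f$ and $g$ live in the Harrison-normalized groups $C^{1,1}$ and $C^{0,2}=\Sym(A\times A,\Gamma)$, so that "degenerate'' means the same thing (a zero coordinate) in both, and that the correct substitution ($a=b=0$ versus $a=0$ alone) is used for each identity; choosing the wrong one produces a tautology rather than the desired relation.
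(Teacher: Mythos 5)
Your handling of $f(0,x)$, of the two $g$-relations, and of part (2) is correct and essentially coincides with the paper's proof: the paper obtains $f(0,x)=0$ from \eqref{E:2CocycleFull} with $a=0$ (your $a=b=0$ variant is the same computation), the $g$-identities from \eqref{E:2CocycleFull3} with $b=0$ and then $a=0$ or $c=0$ (your variant, $a=0$ followed by the symmetry \eqref{E:2CocycleSym}, is an equivalent route), and part (2) as an immediate consequence of part (1), read against the definition of the normalized groups. However, your argument for $f(x,0)=0$ rests on a false premise. In the full theory $f$ is \emph{not} linear in its second argument: the paper defines $C^{1,1}(A;\Gamma)=\Fun(A\times A,\Gamma)$, since the partial-shuffle conditions are vacuous for $(i,j)=(1,1)$, and linearity in the last coordinate, condition \eqref{E:LinLastCo}, is a feature of the \emph{reduced} complex only. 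Indeed, the entire content of identity \eqref{E:2CocycleFull2} is that it measures the failure of additivity of $f(a,-)$ by the term $g(a\cdot b,a\cdot c)-g(b,c)$; asserting that ``$f$ restricted to a fixed first slot is additive'' would force this term to vanish identically, which is false in general --- it already fails for the coboundaries \eqref{E:2CoboundFull}--\eqref{E:2CoboundFull2} built from a nonlinear $\theta$. So neither ``$f(x,0)=f(x,0+0)$ splits'' nor the ``more cleanly'' claim is available as stated.

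The gap is local and easily repaired, and the repair is exactly what the paper does: specialize \eqref{E:2CocycleFull2} at $b=c=0$. The left-hand side becomes $f(a,0)-2f(a,0)=-f(a,0)$, while by Lemma~\ref{L:LinCycleSet0} (namely $a\cdot 0=0$) the right-hand side is $g(a\cdot 0,a\cdot 0)-g(0,0)=g(0,0)-g(0,0)=0$, whence $f(a,0)=0$. Note that the $g$-terms cancel only because of this particular substitution, not because of any general linearity of $f$ in the second slot --- precisely the full-versus-reduced distinction that your closing paragraph correctly warns about but that your $f(x,0)$ step overlooks.
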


\begin{proof}
	Let us prove the first claim. The relation $f(0,x) = 0$ follows from~\eqref{E:2CocycleFull} by choosing
			$a=0$. Similarly, the relation $f(x,0) = 0$ is~\eqref{E:2CocycleFull2}
			specialized at $b=c=0$. Substitutions $b=0$ and either $a=0$ or $c=0$
			in~\eqref{E:2CocycleFull3} yield the last relation. 
			Now the second claim directly follows from the previous point. 
\end{proof}

\begin{lem}\label{L:ExtFromCocycleFull}
	Let $(A, \cdot, +)$ be a linear cycle set, $\Gamma$ be an abelian group, and $f,g \colon A\times A\to \Gamma$ be two maps. Then the set $\Gamma \times A$ with the operations
\begin{linenomath*}\begin{align*}
(\gamma,a) + (\gamma',a') &= (\gamma + \gamma' + g(a,a'),\, a + a'),\\
(\gamma,a) \cdot (\gamma',a') &= (\gamma' + f(a,a'),\, a \cdot a')
\end{align*}\end{linenomath*} 
for $\gamma,\gamma' \in \Gamma$,\, $a,a' \in A$, is a linear cycle set if and only if $(f,g)$ is a $2$-cocycle, $(f,g) \in Z^2(A;\Gamma)$. 
\end{lem}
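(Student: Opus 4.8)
The plan is to verify directly that the two deformed operations on $\Gamma \times A$ satisfy the three linear cycle set axioms, translating each axiom into a condition on the pair $(f,g)$, and to check that these conditions are \emph{exactly} the cocycle identities \eqref{E:2CocycleFull}--\eqref{E:2CocycleFull3} together with the symmetry of~$g$. Since the statement is an ``if and only if'', each axiom computation must produce a necessary and sufficient condition, so I will keep all terms rather than assuming anything about $(f,g)$ in advance. The structure of the argument mirrors that of Lemma~\ref{L:ExtFromCocycle}, but now the additive operation is genuinely deformed, so I must first confirm that $(\Gamma \times A, +)$ is an abelian group before addressing the cycle set compatibility.

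First I would treat the additive operation. Commutativity of $+$ on $\Gamma \times A$ holds if and only if $g(a,a') = g(a',a)$, giving the symmetry relation~\eqref{E:2CocycleSym}. Associativity of $+$ unwinds to the condition $g(a,b) + g(a+b,c) = g(b,c) + g(a,b+c)$, which is precisely~\eqref{E:2CocycleFull3}; this is the statement that $g$ is an additive $2$-cocycle of $(A,+)$. The neutral element and inverses then exist automatically (one can take $(- \gamma - g(a,-a),\, -a)$ as the inverse of $(\gamma,a)$, using the normalization consequences from Lemma~\ref{L:2CocycleProperties}, although for the group structure alone the cocycle condition suffices to produce a unit after a possible shift). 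So the abelian group axioms for $+$ are equivalent to $g$ being a symmetric $2$-cocycle of $(A,+)$.

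Next I would handle the cycle set operation. The left translations $(\gamma,a)\cdot(-)$ are bijective precisely because the second coordinate $a\cdot(-)$ is bijective on~$A$ and the first coordinate acts by the affine bijection $\gamma' \mapsto \gamma' + f(a,a')$; this requires no condition on $(f,g)$ and follows from the corresponding property for~$A$, exactly as in Lemma~\ref{L:ExtFromCocycle}. Then I compute the two linearity/compatibility axioms. Expanding axiom~\eqref{E:LinCyclic}, namely $x \cdot (y + z) = x\cdot y + x \cdot z$ for $x=(\gamma,a)$, etc., and comparing first coordinates yields, after cancellation, the identity $f(a,b+c) - f(a,b) - f(a,c) = g(a\cdot b, a\cdot c) - g(b,c)$, which is exactly~\eqref{E:2CocycleFull2}; the second coordinates match because $a\cdot(b+c) = a\cdot b + a\cdot c$ already holds in~$A$. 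Expanding axiom~\eqref{E:LinCyclic2}, namely $(x+y)\cdot z = (x\cdot y)\cdot(x\cdot z)$, and again reading off first coordinates gives $f(a+b,c) = f(a\cdot b, a\cdot c) + f(a,c)$, which is~\eqref{E:2CocycleFull}; the second coordinates agree by~\eqref{E:LinCyclic2} for~$A$. The original cycle set relation~\eqref{E:Cyclic} then follows from~\eqref{E:LinCyclic2} and the commutativity of~$+$, just as recorded in Lemma~\ref{L:ExtFromCocycle}.

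The main obstacle I anticipate is bookkeeping in the axiom~\eqref{E:LinCyclic} computation, where both the deformed addition (contributing $g$-terms) and the deformed multiplication (contributing $f$-terms) interact: one must carefully expand $x\cdot(y+z)$ by first forming $y+z = (\gamma' + \gamma'' + g(a',a''),\, a'+a'')$ and then applying the multiplication, and separately expand $x\cdot y + x\cdot z$, so that the $g$-contribution appears on the multiplicative side via the argument $a\cdot b, a\cdot c$. Tracking which $g$-terms land where, and confirming that the surviving combination is precisely~\eqref{E:2CocycleFull2} with the correct signs, is the delicate step; everything else reduces to the already-established properties of~$A$ and to direct substitution. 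Since each axiom translates into exactly one of the defining identities with no extra constraints, the equivalence $(f,g) \in Z^2(A;\Gamma)$ follows.
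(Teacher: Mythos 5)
Your proposal is correct and takes essentially the same approach as the paper: each axiom for the twisted operations is translated, as a genuine equivalence, into exactly one of the identities (symmetry of~$g$, \eqref{E:2CocycleFull3}, \eqref{E:2CocycleFull2}, \eqref{E:2CocycleFull}), with left-translation bijectivity inherited from~$A$. The only slip is your inverse formula: since the unit of $(\Gamma\times A,+)$ is $(-g(0,0),0)$, the inverse of $(\gamma,a)$ is $(-\gamma-g(0,0)-g(a,-a),\,-a)$, the term $-g(0,0)$ disappearing only when the cocycle is normalized.
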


\begin{notation}\label{N:ExtFull}
The LCS from the lemma is denoted by $\Gamma \oplus_{f,g} A$.
\end{notation}

\begin{proof}
	The left translation invertibility for $\Gamma \oplus_{f,g} A$ follows from
	the same property for~$A$.  Properties~\eqref{E:LinCyclic}
	and~\eqref{E:LinCyclic2} for $\Gamma \oplus_{f,g} A$ are equivalent to,
	respectively, properties~\eqref{E:2CocycleFull2} and~\eqref{E:2CocycleFull}
	for~$(f,g)$. The associativity and the commutativity of~$+$ on $\Gamma
	\oplus_{f,g} A$ are encoded by property~\eqref{E:2CocycleFull3} for $\Gamma
	\oplus_{f,g} A$ and the symmetry of~$g$ respectively. Finally, if $(f,g)$
	is a $2$-cocycle, then Lemma~\ref{L:2CocycleProperties} implies that
	$(-g(0,0),0)$ is the zero element for $(\Gamma \oplus_{f,g} A, \, +)$, and
	the opposite of $(\gamma, a)$ is $(-\gamma - g(0,0) - g(a,-a), \, -a)$.
\end{proof}

As we did in Lemma~\ref{L:BraceTransl}, we now translate Lemma~\ref{L:ExtFromCocycleFull} into the language of	braces.

\begin{lem}\label{L:BraceTranslFull}
	Let $(A, \circ, +)$ be a brace, $\Gamma$ be an abelian group, and $f,g\colon A\times A\to \Gamma$ be two maps. Then the set $\Gamma \times A$ with the operations
	\begin{linenomath*}\begin{align*}
	(\gamma,a)+(\gamma',a')&=(\gamma+\gamma'+g(a,a'),a+a'),\\
	(\gamma,a) \circ (\gamma',a')&=(\gamma+\gamma'+f(a,a'),a \circ a')
	\end{align*}\end{linenomath*}
for $\gamma,\gamma'\in\Gamma$,\, $a,a'\in A$, is a brace if and only if for the corresponding linear cycle set $(A, \cdot, +)$, the maps
\begin{linenomath*}\begin{align}\label{E:BraceAndLCSCocycles}
\overline{f}(a,b) = -f(a, a \cdot b) + g(a,b)
\end{align}\end{linenomath*}
 and $g$ form a $2$-cocycle $(\overline{f},g)\in Z^2(A;\Gamma)$. 
\end{lem}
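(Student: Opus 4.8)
The statement is an ``if and only if'' linking the brace condition for $\Gamma \times A$ with the $2$-cocycle condition $(\overline{f}, g) \in Z^2(A;\Gamma)$ for the associated linear cycle set. The plan is to reduce everything to the already-established Lemma~\ref{L:ExtFromCocycleFull} by verifying that the brace structure on $\Gamma \times A$ described here corresponds, under the brace/LCS dictionary $a \cdot b = a^{-1}\circ(a+b)$, precisely to the LCS structure $\Gamma \oplus_{\overline f, g} A$ from that lemma. First I would record that the additive operation on $\Gamma \times A$ is identical in both lemmas, so the deformation of $+$ by $g$ transfers verbatim; only the multiplicative versus cycle-set operation needs translating.

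The key computation is to express the induced cycle set operation on $\Gamma \times A$ in terms of $f$ and $g$, and to check it matches the formula $(\gamma, a) \cdot (\gamma', a') = (\gamma' + \overline f(a,a'),\, a \cdot a')$ prescribed by Lemma~\ref{L:ExtFromCocycleFull} with the first cocycle component $\overline f$. Concretely, on $\Gamma \times A$ the cycle set operation is $(\gamma,a) \cdot (\gamma',a') = (\gamma,a)^{-1} \circ \big((\gamma,a) + (\gamma',a')\big)$. I would compute the $\circ$-inverse of $(\gamma,a)$ in the deformed brace, then apply the deformed $\circ$-product to $(\gamma,a)^{-1}$ and $(\gamma,a)+(\gamma',a')$, carefully tracking the $f$- and $g$-contributions. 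The second coordinate is forced to be $a^{-1}\circ(a+a') = a\cdot a'$ by the original brace structure on $A$, so the whole content lives in the first coordinate. Collecting the $\Gamma$-terms should produce exactly $\gamma' + \big(-f(a, a\cdot a') + g(a,a')\big) = \gamma' + \overline f(a,a')$, which is the defining relation~\eqref{E:BraceAndLCSCocycles}. Granting this identification of operations, the biconditional follows immediately: $(\Gamma \times A, +, \circ)$ is a brace if and only if $(\Gamma \times A, +, \cdot)$ is a linear cycle set, which by Lemma~\ref{L:ExtFromCocycleFull} holds if and only if $(\overline f, g) \in Z^2(A;\Gamma)$.

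The main obstacle I anticipate is the bookkeeping in inverting $(\gamma,a)$ inside the $g$-deformed, $f$-deformed brace: the inverse's $\Gamma$-coordinate will itself involve both $f$ and $g$ evaluated at various arguments (by analogy with the formula for the additive opposite in the proof of Lemma~\ref{L:ExtFromCocycleFull}), and these must be shown to cancel or recombine correctly so that the net first-coordinate contribution collapses to $-f(a, a\cdot a') + g(a,a')$ with no residual terms. A clean way to sidestep part of this is to argue structurally rather than by brute force: since the brace axiom~\eqref{E:Brace} relating $\circ$ and $+$ is, for each fixed pair of group structures, equivalent to the conjunction of the LCS compatibility relations~\eqref{E:LinCyclic}--\eqref{E:LinCyclic2} under the correspondence $a\cdot b = a^{-1}\circ(a+b)$, one need only verify that the $\cdot$-operation obtained from the given $\circ$ is \emph{well defined and of the claimed form}, after which the equivalence of structures is automatic from Rump's dictionary. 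I would therefore isolate the first-coordinate identity as the single nontrivial lemma, verify it by a direct expansion (treating the $g(0,0)$ normalization terms from Lemma~\ref{L:2CocycleProperties} as they arise), and then invoke Lemma~\ref{L:ExtFromCocycleFull} to conclude.
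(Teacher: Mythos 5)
Your overall strategy coincides with the paper's: reduce everything to Lemma~\ref{L:ExtFromCocycleFull} via Rump's brace/LCS dictionary, note that the additive deformation by $g$ is common to both statements, and concentrate the content in a first-coordinate identity; your predicted identity $\gamma' - f(a,a\cdot a') + g(a,a') = \gamma' + \overline f(a,a')$ is exactly the right one. However, there is a genuine gap in how you set up the identification of operations, and it breaks precisely the ``if'' direction. You define the induced cycle set operation on $\Gamma\times A$ by $(\gamma,a)\cdot(\gamma',a') = (\gamma,a)^{-1}\circ\bigl((\gamma,a)+(\gamma',a')\bigr)$, i.e., you run the dictionary from the brace side, and your structural sidestep is likewise premised on ``each fixed pair of group structures''. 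That is fine when the brace structure on $\Gamma\times A$ is a hypothesis (the ``only if'' direction). But in the ``if'' direction the hypothesis is only $(\overline f,g)\in Z^2(A;\Gamma)$: at that point $\circ$ is merely a binary operation given by a formula, and you do not yet know it is associative, has a unit, or admits inverses---that is part of what must be proved. So ``the $\cdot$-operation obtained from the given $\circ$'' is not defined there, and the biconditional ``brace $\Leftrightarrow$ LCS'' cannot be invoked: Rump's dictionary is a correspondence between structures, not an equivalence of axioms for arbitrary pairs of binary operations.

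The repair---and it is what the paper actually does---is to run the dictionary in the direction that needs no $\circ$-inverses. From $(\overline f,g)\in Z^2(A;\Gamma)$, Lemma~\ref{L:ExtFromCocycleFull} gives the LCS $\Gamma\oplus_{\overline f,g}A$, whose left translations are bijective (inherited from~$A$); their inverse operation has the one-line form $(\gamma,a)\ast(\gamma',a') = (\gamma' - \overline f(a,a\ast a'),\, a\ast a')$, and Rump's construction then produces a genuine brace with product $X\circ Y = X + X\ast Y$. Substituting $b = a\ast a'$ in \eqref{E:BraceAndLCSCocycles} shows this product is exactly the $\circ$ of the statement, so the given operations do form a brace. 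Equivalently, you can make your own identification inverse-free by checking the identity $X\circ(X\cdot Z) = X + Z$ on $\Gamma\times A$ directly from the formulas: its second coordinate holds because $A$ is a brace, and its first coordinate is literally \eqref{E:BraceAndLCSCocycles}; this identity, unlike yours, presupposes neither structure, and feeds both directions of the argument. With that replacement your ``only if'' computation goes through as you describe, and the lemma follows.
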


\begin{proof}
Recall the correspondence $a\cdot b=a^{-1}\circ(a+b)$ between the corresponding brace and LCS operations. It can also be rewritten as $a \circ b = a+ a \ast b$, where the map $a \mapsto a \ast b$ is the inverse of the left translation $a \mapsto a \cdot b$.

Now, given any $(\overline{f},g)\in Z^2(A;\Gamma)$, the formulas from Lemma~\ref{L:ExtFromCocycleFull} describe a LCS structure on $\Gamma \times A$. Its operation~$\ast$ reads
\begin{linenomath*}\begin{align*}
	(\gamma,a) \ast (\gamma',a')&=(\gamma'-\overline{f}(a, a \ast a'),a \ast a').
\end{align*}\end{linenomath*}
The operations
	\begin{linenomath*}\begin{align*}
	(\gamma,a)+(\gamma',a')&=(\gamma+\gamma'+g(a,a'),a+a'),\\
	(\gamma,a) \circ (\gamma',a')&= (\gamma,a) + (\gamma,a) \ast (\gamma',a')\\
	&= (\gamma+\gamma'- \overline{f}(a,a \ast a')+g(a,a \ast a'),a \circ a')
	\end{align*}\end{linenomath*}
then yield a brace structure on $\Gamma \times A$. These formulas have the desired form, with 
$$f(a,a') = - \overline{f}(a,a \ast a')+g(a,a \ast a'),$$
which, through the substitution $b = a \ast a'$, is equivalent to~\eqref{E:BraceAndLCSCocycles}.

Conversely, starting from a brace structure on $\Gamma \times A$ of the desired form, one sees that its associated LCS structure is as described in Lemma~\ref{L:ExtFromCocycleFull} with some $(\overline{f},g)\in Z^2(A;\Gamma)$. Repeating the argument above, one obtains the relation~\eqref{E:BraceAndLCSCocycles} connecting $f$, $\overline{f}$, and $g$.
\end{proof}

\begin{defn}
A \emph{central extension} of a linear cycle set $(A, \cdot, +)$ by an abelian group $\Gamma$ is the datum of a short exact sequence of linear cycle sets
\begin{linenomath*}
\begin{align}\label{E:ShortExactSqFull}
&0 \to \Gamma \overset{\iota}{\to} E \overset{\pi}{\to} A \to 0,
\end{align}
\end{linenomath*}
where $\Gamma$ is endowed with the trivial cycle set structure, and its image $\iota (\Gamma)$ is central in~$E$ (in the sense of Definition~\ref{D:Central}). The notion of \emph{equivalence} for central cycle-type LCS extensions (Definition~\ref{D:ExtEquiv}) transports verbatim to these general extensions. The set of equivalence classes of central extensions of~$A$ by~$\Gamma$ is denoted by $\Ext(A,\Gamma)$.
\end{defn} 

The LCS $\Gamma \oplus_{f,g} A$ from Lemma~\ref{L:ExtFromCocycleFull} is an extension of~$A$ by~$\Gamma$ in the obvious way. We now show that this example is essentially exhaustive.

\begin{lem}\label{L:section->cocycleFull}
Let $\Gamma \overset{\iota}{\rightarrowtail} E \overset{\pi}{\twoheadrightarrow} A$ be a central LCS extension, and $s \colon A \to E$ be a set-theoretic section of~$\pi$. 
\begin{enumerate}
\item The maps $\widetilde{f},\widetilde{g} \colon A \times A \to E$ defined by
\begin{linenomath*}
	\begin{align*}
\widetilde{f} \colon (a,a') &\mapsto s(a) \cdot s(a') - s(a \cdot a'),\\
\widetilde{g} \colon  (a,a') &\mapsto s(a) + s(a') - s(a + a')
\end{align*}
\end{linenomath*}
both take values in $\iota(\Gamma)$ and determine a cocycle $(f,g) \in Z^2(A;\Gamma)$. 
\item The cocycle above is normalized if and only if $s$ is such, in the sense of $s(0)=0$.
\item Extensions~$E$ and $\Gamma \oplus_{f,g} A$ are equivalent. 
\item A cocycle $(f',g')$ obtained from another section~$s'$ of~$\pi$ is cohomologous to $(f,g)$. If both cocycles are normalized, then they are cohomologous in the normalized sense.
\end{enumerate}  
\end{lem}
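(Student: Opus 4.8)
The plan is to verify each of the four claims in order, exploiting the fact that the cycle-type case (Lemma~\ref{L:section->cocycle}) already handles the multiplicative component~$\widetilde{f}$, so the genuinely new work concerns the additive component~$\widetilde{g}$ and the interaction between the two. First I would check that both $\widetilde{f}$ and $\widetilde{g}$ land in $\iota(\Gamma)$: applying $\pi$ and using that $\pi$ is a LCS morphism and $s$ a section gives $\pi\widetilde{f}(a,a') = a\cdot a' - a\cdot a' = 0$ and $\pi\widetilde{g}(a,a') = a + a' - (a+a') = 0$, so both maps factor through $\iota(\Gamma) = \Ker\pi$, defining $f = \iota^{-1}\widetilde{f}$ and $g = \iota^{-1}\widetilde{g}$. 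Note that here $s$ is only set-theoretic (not linear), which is the essential difference from Lemma~\ref{L:section->cocycle} and the reason $\widetilde{g}$ need not vanish.

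For claim~(1), I must establish the three cocycle identities~\eqref{E:2CocycleFull}--\eqref{E:2CocycleFull3} together with the symmetry of~$g$. The symmetry $g(a,a') = g(a',a)$ is immediate from commutativity of~$+$ in~$E$. Identity~\eqref{E:2CocycleFull3} is exactly the statement that~$\widetilde{g}$ is a symmetric $2$-cocycle of the abelian group $(A,+)$, which follows from associativity of~$+$ in~$E$ by the standard Hochschild computation. Identity~\eqref{E:2CocycleFull} for~$f$ is derived precisely as in the proof of Lemma~\ref{L:section->cocycle}: the computation there uses only the centrality of $\iota(\Gamma)$ and the linearity of the left translations $t_b$, neither of which relied on~$s$ being linear for \emph{that particular} relation, since expanding $\widetilde{f}(a+b,c)$ invokes the additive structure only through $s(a+b)$, and the discrepancy $\widetilde{g}(a,b)$ produced there is absorbed by centrality. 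The new mixed relation~\eqref{E:2CocycleFull2} is the main point: I would expand $\widetilde{f}(a, b+c)$ using $s(b+c) = s(b)+s(c) - \widetilde{g}(b,c)$ and $s(a\cdot(b+c)) = s(a\cdot b + a\cdot c) = s(a\cdot b) + s(a\cdot c) - \widetilde{g}(a\cdot b, a\cdot c)$, apply the additive linearity~\eqref{E:LinCyclic} of the translation $s(a)\cdot$, and collect terms; the $\widetilde{g}$ contributions survive (rather than cancelling via centrality, as $f$-terms do) and assemble into exactly $g(a\cdot b, a\cdot c) - g(b,c)$ after applying~$\iota^{-1}$. I expect this bookkeeping to be the main obstacle, since one must carefully track which $\iota(\Gamma)$-valued terms are killed by centrality and which are retained.

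Claim~(2) is then a direct consequence of Lemma~\ref{L:2CocycleProperties}(2): the cocycle is normalized iff $g(0,0)=0$, and $\widetilde{g}(0,0) = 2s(0) - s(0) = s(0)$, so $g(0,0)=0$ precisely when $s(0)=0$, i.e. when~$s$ is normalized. For claim~(3) I would define $\varphi\colon \Gamma \oplus_{f,g} A \to E$ by $\gamma \oplus a \mapsto \iota(\gamma) + s(a)$, exactly as before, with inverse $x \mapsto \iota^{-1}(x - s\pi(x)) \oplus \pi(x)$. The verification that~$\varphi$ respects the cycle set operation is identical to Lemma~\ref{L:section->cocycle}; what is new is checking that~$\varphi$ respects the \emph{deformed} addition of $\Gamma \oplus_{f,g} A$, which reduces to the defining relation $\widetilde{g}(a,a') = s(a) + s(a') - s(a+a')$ together with linearity of~$\iota$, and the commutativity of diagram~\eqref{E:ExtEquiv} is again immediate.

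Finally, for claim~(4), given a second section~$s'$ I would set $\widetilde{\theta} = s - s' \colon A \to E$, which takes values in $\Ker\pi = \Im\iota$ and hence defines $\theta = \iota^{-1}\widetilde{\theta} \colon A \to \Gamma$. I then show $(f' - f, g' - g) = \partial^1\theta$ by computing the two differences separately. The difference $\widetilde{f} - \widetilde{f}'$ is handled exactly as in Lemma~\ref{L:section->cocycle}, yielding $f' - f = \theta(b) - \theta(a\cdot b)$, matching~\eqref{E:2CoboundFull} up to sign convention. For the additive part, expanding $(\widetilde{g} - \widetilde{g}')(a,a')$ and substituting $s' = s - \widetilde{\theta}$ gives $\widetilde{g}'(a,a') - \widetilde{g}(a,a') = \widetilde{\theta}(a) + \widetilde{\theta}(a') - \widetilde{\theta}(a+a')$, which upon applying~$\iota^{-1}$ is exactly $g' - g = \partial^1\theta$ in the sense of~\eqref{E:2CoboundFull2}. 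If both cocycles are normalized, then by claim~(2) both $s$ and $s'$ are normalized, so $\widetilde{\theta}(0) = s(0) - s'(0) = 0$, whence $\theta(0)=0$ and the cohomology is realized within the normalized complex, giving the final refinement.
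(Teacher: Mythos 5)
Your proposal is correct, and it is exactly the route the paper intends: the paper in fact \emph{omits} the proof of this lemma, saying only that it is ``technical but conceptually analogous'' to the proofs of Lemmas~\ref{L:section->cocycle} and~\ref{L:ExtEquivVsCohom}, and your argument supplies precisely that adaptation --- centrality of $\iota(\Gamma)$ absorbing the $\widetilde{g}(a,b)$-discrepancy in the verification of~\eqref{E:2CocycleFull}, the mixed relation~\eqref{E:2CocycleFull2} arising from the surviving $\widetilde{g}$-terms under the linear left translation $s(a)\cdot$, and the Hochschild-type computation for~\eqref{E:2CocycleFull3}. Two harmless slips worth fixing: in part (4) your signs are reversed (with $\widetilde{\theta}=s-s'$ one gets $f'-f=\theta(a\cdot b)-\theta(b)$ and $g'-g=\theta(a+b)-\theta(a)-\theta(b)$, i.e.\ exactly $+\partial^1\theta$ as in \eqref{E:2CoboundFull}--\eqref{E:2CoboundFull2}, though either sign yields cohomologousness since coboundaries form a group), and in part (3) the commutativity of diagram~\eqref{E:ExtEquiv} is not quite ``immediate'' when $s(0)\neq 0$: the additive unit of $\Gamma\oplus_{f,g}A$ is $(-g(0,0),0)$, so the structure inclusion is $\gamma\mapsto(\gamma-g(0,0),0)$, and one checks $\varphi(\gamma-g(0,0),0)=\iota(\gamma)-\widetilde{g}(0,0)+s(0)=\iota(\gamma)$, which does commute.
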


\begin{lem}\label{L:ExtEquivVsCohomFull}
Let $(A, \cdot, +)$ be a linear cycle set, $\Gamma$ be an abelian group, and $(f,g)$ and $(f',g') \in Z^2(A;\Gamma)$ be $2$-cocycles. The linear cycle set extensions $\Gamma \oplus_{f,g} A$ and $\Gamma \oplus_{f',g'} A$ are equivalent if and only if $(f,g)-(f',g')$ is a normalized $2$-coboundary.
\end{lem}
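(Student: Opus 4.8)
The plan is to mimic the proof of Lemma~\ref{L:ExtEquivVsCohom}, the cycle-type analogue, but now tracking the deformed additive operation as well. An equivalence is a LCS isomorphism $\varphi \colon \Gamma \oplus_{f,g} A \to \Gamma \oplus_{f',g'} A$ compatible with~$\iota$ and~$\pi$. The commutativity of the diagram~\eqref{E:ExtEquiv} forces $\varphi(\gamma, a) = (\gamma + \theta(a), a)$ for some set-theoretic map $\theta \colon A \to \Gamma$; the point now is that $\theta$ need \emph{not} be additive, since the additive structures on both sides are themselves twisted. First I would translate the requirement that $\varphi$ respect~$+$: expanding $\varphi((\gamma,a)+(\gamma',a'))$ and $\varphi(\gamma,a)+\varphi(\gamma',a')$ yields exactly the condition $g'(a,a') - g(a,a') = \theta(a+a') - \theta(a) - \theta(a')$, which is~\eqref{E:2CoboundFull2}. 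Next I would translate the requirement that $\varphi$ respect~$\cdot$: the same computation as in Lemma~\ref{L:ExtEquivVsCohom} gives $f'(a,a') - f(a,a') = \theta(a\cdot a') - \theta(a')$, which is~\eqref{E:2CoboundFull}. So $\varphi$ is an equivalence precisely when $(f',g')-(f,g) = \partial^2(\theta)$ for this~$\theta$.

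The remaining subtlety is the word \emph{normalized} in the statement, and this is where I expect the main obstacle. The coboundary $\partial^2(\theta)$ is automatically normalized only when $\theta(0)=0$: indeed from~\eqref{E:2CoboundFull2} one computes the $(0,0)$-value $g'(0,0)-g(0,0) = -\theta(0)$, so the cocycle difference is a normalized coboundary exactly when $\theta(0)=0$. Thus I must show that in an equivalence one may always arrange $\theta(0)=0$, or equivalently that the ambient isomorphism can be chosen to respect the zero elements. The clean way is to observe that $\varphi$, being an LCS isomorphism, sends the zero of $\Gamma \oplus_{f,g} A$ to the zero of $\Gamma \oplus_{f',g'} A$. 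By Lemma~\ref{L:2CocycleProperties}\,(or the explicit zero $(-g(0,0),0)$ from the proof of Lemma~\ref{L:ExtFromCocycleFull}\,), these zeros are determined by $g(0,0)$ and $g'(0,0)$; comparing with $\varphi(\gamma,a)=(\gamma+\theta(a),a)$ at the zero forces a relation pinning down $\theta(0)$ in terms of $g(0,0)-g'(0,0)$, which is consistent with the normalization claim. I would package this as: the difference $(f,g)-(f',g')$ is a coboundary $\partial^2(\theta)$, and the constant $\theta(0)$ contributes only to the $g$-component at $(0,0)$, so after checking the zero-element constraint the coboundary is seen to be normalized.

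For the converse direction, I would start from the hypothesis that $(f,g)-(f',g') = \partial^2(\theta)$ is a normalized $2$-coboundary, define $\varphi(\gamma,a) = (\gamma + \theta(a), a)$, and simply reverse the two computations above to verify that $\varphi$ preserves both~$+$ and~$\cdot$; bijectivity is immediate since $\varphi^{-1}(\gamma,a)=(\gamma-\theta(a),a)$, and the diagram~\eqref{E:ExtEquiv} commutes by construction. The only care needed is to confirm that $\varphi$ sends zero to zero, which follows from the normalization $\theta(0)=0$ forced by the normalized coboundary together with the matching values $g(0,0)=g'(0,0)$. I expect the bulk of the writing to be the sign-bookkeeping in expanding the two compatibility conditions, and the genuinely delicate point to be the careful handling of the non-normalized constant $\theta(0)$, since unlike the reduced case the additive twist makes $\theta$ a bare set map rather than a linear one.
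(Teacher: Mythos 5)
Your two expansion computations are correct and are exactly the engine of the argument the paper has in mind: the paper omits this proof, declaring it analogous to Lemma~\ref{L:ExtEquivVsCohom}, and indeed with $\varphi(\gamma,a)=(\gamma+\theta(a),a)$ the compatibility with $+$ is equivalent to \eqref{E:2CoboundFull2} for $g'-g$, the compatibility with $\cdot$ to \eqref{E:2CoboundFull} for $f'-f$, and your converse construction (invert by $-\theta$, check both operations) is sound. You also correctly identify that $\theta$ is only a set map here. (A notational slip: the paper writes these coboundaries as $\partial^1\theta$, not $\partial^2(\theta)$.)

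However, your normalization step has a genuine gap: the mechanism you propose provably does not yield $\theta(0)=0$. Zero-preservation by the LCS isomorphism $\varphi$ gives, as you yourself compute, $\theta(0)=g(0,0)-g'(0,0)$, and nothing in your argument makes this vanish; the assertion that ``the coboundary is seen to be normalized'' is a non sequitur. To see that the step as written fails, take $f'=f$ and $g'=g+c$ for a constant $c\in\Gamma$: one checks from \eqref{E:2CocycleFull}--\eqref{E:2CocycleFull3} that $(f,g+c)$ is again a $2$-cocycle, that $(f',g')-(f,g)=(0,c)=\partial^1\theta$ with $\theta\equiv-c$ is a \emph{non}-normalized coboundary (no normalized $\theta$ realizes it, since evaluating \eqref{E:2CoboundFull2} at $(0,0)$ forces $\theta(0)=-c$), and yet $\varphi(\gamma,a)=(\gamma-c,a)$ is an LCS isomorphism commuting with $\pi$ and sending zero to zero. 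What excludes this $\varphi$ as an equivalence is the one leg of diagram~\eqref{E:ExtEquiv} you never invoke in the forward direction: $\varphi\circ\iota=\iota'$, with the canonical inclusion $\iota(\gamma)=(\gamma,0)$ of the extension $\Gamma\oplus_{f,g}A$. That compatibility reads $(\gamma+\theta(0),0)=(\gamma,0)$ and forces $\theta(0)=0$ outright; your additive identity \eqref{E:2CoboundFull2} at $a=a'=0$ then gives $g(0,0)=g'(0,0)$, consistently. (Note that $\iota(\gamma)=(\gamma,0)$ is an additive morphism precisely when $g(0,0)=0$, i.e.\ when the cocycle is normalized in the sense of Lemma~\ref{L:2CocycleProperties}; this is where normalization enters the extension structure itself, and why the $\iota$-leg, rather than zero-preservation, is the step that cannot be skipped. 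It is also needed, together with additivity of $\varphi$, to justify your opening claim that $\varphi$ has the form $(\gamma,a)\mapsto(\gamma+\theta(a),a)$.) In the converse direction the same remark applies: check $\varphi\circ\iota=\iota'$, which is immediate from $\theta(0)=0$.
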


Recall that a normalized $2$-coboundary is a couple of maps of the form $\partial^1 \theta$, where the map $\theta \colon A \to \Gamma$ is normalized, in the sense of $\theta(0) = 0$.

The proof of these lemmas is technical but conceptually analogous to the proofs of Lemmas~\ref{L:section->cocycle} and~\ref{L:ExtEquivVsCohom}, and will therefore be omitted.

Put together, the preceding lemmas prove

\begin{thm}\label{T:Ext=H2Full}
Let $(A, \cdot, +)$ be a linear cycle set and $\Gamma$ be an abelian group. The construction from Lemma~\ref{L:section->cocycleFull} yields a bijective correspondence
$$Ext(A,\Gamma) \overset{1:1}{\longleftrightarrow} H_{\Norm}^2(A; \Gamma).$$
\end{thm}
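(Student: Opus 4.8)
The plan is to assemble the final theorem from the preceding lemmas exactly as the reduced case was assembled into Theorem~\ref{T:Ext=H2}. The construction from Lemma~\ref{L:section->cocycleFull} sends a central extension $E$, together with a choice of section, to a $2$-cocycle $(f,g) \in Z^2(A;\Gamma)$; I would first observe that by choosing a \emph{normalized} section $s$ (i.e.\ $s(0)=0$, which is always possible) part~(2) of that lemma guarantees that the associated cocycle is normalized, so that the assignment lands in $Z_{\Norm}^2(A;\Gamma)$ and hence, after passing to cohomology, in $H_{\Norm}^2(A;\Gamma)$. Part~(4) of the same lemma shows that changing the normalized section replaces $(f,g)$ by a cocycle that is cohomologous \emph{in the normalized sense}, so the cohomology class is independent of the section. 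This yields a well-defined map $\Phi \colon \Ext(A,\Gamma) \to H_{\Norm}^2(A;\Gamma)$; part~(3) (equivalence of $E$ with $\Gamma \oplus_{f,g} A$) shows moreover that $\Phi$ respects the equivalence relation on extensions, so $\Phi$ is defined on equivalence classes.

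Next I would check that $\Phi$ is a bijection by exhibiting the inverse built from Lemma~\ref{L:ExtFromCocycleFull}. Given a normalized $2$-cocycle $(f,g)$, that lemma produces the extension $\Gamma \oplus_{f,g} A$, and I would send its class in $H_{\Norm}^2$ to the equivalence class of this extension. For this to be well defined on cohomology classes I invoke Lemma~\ref{L:ExtEquivVsCohomFull}: two cocycles give equivalent extensions precisely when their difference is a normalized $2$-coboundary, which is exactly the relation defining equality in $H_{\Norm}^2(A;\Gamma)$. Thus the map $H_{\Norm}^2(A;\Gamma) \to \Ext(A,\Gamma)$, $[(f,g)] \mapsto [\Gamma \oplus_{f,g} A]$, is well defined and injective.

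Finally I would verify that the two maps are mutually inverse. One composite is immediate: starting from a class $[(f,g)]$, forming $\Gamma \oplus_{f,g} A$, and extracting a cocycle via the obvious linear section $a \mapsto (0,a)$ returns $(f,g)$ on the nose, so $\Phi$ is surjective onto the image of the second map. The other composite uses part~(3) of Lemma~\ref{L:section->cocycleFull}, which asserts that an arbitrary central extension $E$ is equivalent to $\Gamma \oplus_{f,g} A$ for the cocycle $(f,g)$ read off from a normalized section; hence applying the second map to $\Phi([E])$ recovers $[E]$. Combining these, $\Phi$ is a bijection $\Ext(A,\Gamma) \overset{1:1}{\longleftrightarrow} H_{\Norm}^2(A;\Gamma)$.

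The conceptual content is entirely carried by the four cited lemmas, so no genuine obstacle remains at this stage; the theorem is a formal bookkeeping argument. The only point requiring care is the consistent use of \emph{normalization} throughout: one must everywhere restrict to normalized sections and normalized coboundaries so that the normalized cohomology group $H_{\Norm}^2$ (rather than the unnormalized $H^2$) appears, which is why parts~(2) and the normalized half of part~(4) of Lemma~\ref{L:section->cocycleFull}, together with the normalized formulation of Lemma~\ref{L:ExtEquivVsCohomFull}, are essential rather than incidental.
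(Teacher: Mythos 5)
Your proposal is correct and takes essentially the same route as the paper, which proves Theorem~\ref{T:Ext=H2Full} precisely by assembling Lemmas~\ref{L:section->cocycleFull} and~\ref{L:ExtEquivVsCohomFull} (``Put together, the preceding lemmas prove''), your write-up simply making the normalization bookkeeping explicit. The only quibble: the section $a \mapsto (0,a)$ of $\Gamma \oplus_{f,g} A$ is a normalized \emph{set-theoretic} section rather than a linear one (the addition is deformed by $g$), but that is all Lemma~\ref{L:section->cocycleFull} requires, so the argument stands.
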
 

In other words, the central extensions of LCS (and thus of braces) are completely determined by their second normalized cohomology groups.

\bibliographystyle{abbrv}
\bibliography{refs}

\end{document}